\title{Dynamically distinguishing polynomials}
\author{Andrew Bridy}
\address{Department of Mathematics, Texas A\&M University}
\email{\href{mailto:andrewbridy@math.tamu.edu}{andrewbridy@math.tamu.edu}}
\author{Derek Garton}
\address{Fariborz Maseeh Department of Mathematics and Statistics, Portland State University}
\email{\href{mailto:gartondw@pdx.edu}{gartondw@pdx.edu}}
\date{\today}
\subjclass[2010]{Primary 37P05; Secondary 37P25, 11R32, 20B35}
\keywords{Arithmetic Dynamics, Finite Fields, Galois Theory, Wreath Products}
\def\imod#1{\allowbreak\mkern8mu{\operator@font mod}\,\,#1}
\newcommand{\C}{\ensuremath{{\mathbb{C}}}}
\newcommand{\Z}{\ensuremath{{\mathbb{Z}}}}
\renewcommand{\P}{\ensuremath{{\mathbb{P}}}}
\newcommand{\Q}{\ensuremath{{\mathbb{Q}}}}
\newcommand{\F}{\ensuremath{{\mathbb{F}}}}
\newcommand{\lv}{\ensuremath{\left\vert}}
\newcommand{\rv}{\ensuremath{\right\vert}}
\newcommand{\lp}{\ensuremath{\left(}}
\newcommand{\rp}{\ensuremath{\right)}}
\newcommand{\lb}{\ensuremath{\left\{}}
\newcommand{\rb}{\ensuremath{\right\}}}
\newcommand{\lc}{\ensuremath{\left[}}
\newcommand{\rc}{\ensuremath{\right]}}
\DeclareMathOperator{\Gal}{Gal}
\DeclareMathOperator{\Aut}{Aut}
\DeclareMathOperator*{\Fix}{Fix}
\DeclareMathOperator*{\Char}{char}
\DeclareMathOperator*{\ord}{ord}
\DeclareMathOperator*{\Disc}{Disc}
\theoremstyle{plain}
\newtheorem{final}{Theorem}[section]
\newtheorem{fixedpointsinwreaths}[final]{Theorem}
\newtheorem{pnbound}[final]{Theorem}
\newtheorem{comparativerecurrence}[final]{Lemma}
\newtheorem{recurrence}[final]{Corollary}
\newtheorem{genmorton}[final]{Theorem}
\newtheorem{QbarQ}[final]{Corollary}
\newtheorem{automorphismlemma}[final]{Lemma}
\newtheorem{rencontresprop}[final]{Lemma}
\newtheorem{rootsandcycles}[final]{Corollary}
\newtheorem*{namedthm}{\namedthmname}
\newcounter{namedthm}
\theoremstyle{remark}
\newtheorem{fixedpoints}[final]{Example}
\newtheorem{GaloisWreathActions}[final]{Remark}
\newtheorem{HITremark}[final]{Remark}
\newtheorem{FDTremark}[final]{Remark}
\theoremstyle{definition}
\newtheorem{rkn}[final]{Definition}
\newtheorem{Pkn}[final]{Definition}
\newenvironment{named}[1]
  {\def\namedthmname{#1}%
   \refstepcounter{namedthm}%
   \namedthm\def\@currentlabel{#1}}
  {\endnamedthm}
\begin{document}

\begin{abstract}

A polynomial with integer coefficients yields a family of dynamical systems indexed by primes as follows: for any prime $p$, reduce its coefficients mod $p$ and consider its action on the field $\F_p$. We say a subset of $\Z[x]$ is dynamically distinguishable mod $p$ if the associated mod $p$ dynamical systems are pairwise non-isomorphic. For any $k,M\in\Z_{>1}$, we prove that there are infinitely many sets of integers $\mathcal{M}$ of size $M$ such that $\lb x^k+m\mid m\in\mathcal{M}\rb$ is dynamically distinguishable mod $p$ for most $p$ (in the sense of natural density). Our proof uses the Galois theory of dynatomic polynomials largely developed by Morton, who proved that the Galois groups of these polynomials are often isomorphic to a particular family of wreath products. In the course of proving our result, we generalize Morton's work and compute statistics of these wreath products.
\end{abstract}

\maketitle

\section{Introduction}
\label{intro}

A \emph{\textup{(}discrete\textup{)} dynamical system} is a pair $\lp S,f\rp$ consisting of a set $S$ and a function $f:S\to S$.
The \emph{functional graph} of $(S,f)$, which we will denote by $\Gamma(S,f)$, is the directed graph whose set of vertices is $S$ and whose edges are given by the relation $s\to t$ if and only if $f(s)=t$. 

Recently there has been interest in the following problem: given a set $S$ and a family $\mathcal{F}$ of self-maps of $S$, describe or enumerate the set $M(S,\mathcal{F}):=\{\Gamma(S,f)\mid f\in\mathcal{F}\}/\simeq$, where for two directed graphs $\Gamma$ and $\Delta$, we write $\Gamma\simeq\Delta$ if they are isomorphic as directed graphs.
For example, for any $n\in\Z_{>0}$ and prime power $q$, Bach and the first author~\cite{BachBridy} bound the size of $M(S,\mathcal{F})$, where $S=\lp\F_q\rp^n$ and $\mathcal{F}$ is the set of affine-linear transformations from $S$ to itself. Konyagin et.\ al.~\cite{Ketal} give nontrivial upper and lower bounds on $M\lp\F_q,\lb f\in \F_q[x]\mid\deg(f)=d\rb\rp$. Similarly, Ostafe and Sha \cite{OstafeSha} give bounds on $M\lp\F_q,\mathcal{F}\rp$ for certain families $\mathcal{F}$ of rational functions and ``sparse'' polynomials. A special case of Theorem~2.8 of~\cite{Ketal} proves that
\[
\lv M\lp\F_q,\lb x^2+\alpha\mid\alpha\in\F_q\rb\rp\rv>q^{\frac{1}{4}+o(1)}
\]
as $q$ increases amongst odd prime powers.
Moreover, the authors suggest that it is ``most likely'' that for any rational prime $p$ with $p\nin\{2,17\}$,
\[
\lv M\lp\F_p,\lb x^2+\alpha\mid\alpha\in\F_p\rb\rp\rv=p.
\]
However, they also state that ``proving [this suggestion] may be difficult\ldots~as there is no intrinsic reason for this to be true.''

In this paper, we study the suggestion of ~\cite{Ketal} ``in reverse''; that is, we fix (integer polynomial) maps, then vary the set upon which they act by reducing these polynomials modulo rational primes.
Before stating our results, we introduce a bit of notation.
Denote the set of rational primes by $\mathcal{P}$.
For $f\in\Z[x]$ and $p\in\mathcal{P}$, write
\begin{itemize}
\item
$\lc f\rc_p$ for the polynomial in $\F_p[x]$ obtained by reducing the coefficients of $f$ mod $p$ and
\item
$\Gamma_{f,p}$ for $\Gamma\lp\F_p,[f]_p\rp$.
\end{itemize}
We say that a set $\mathcal{F}\subseteq\Z[x]$ is \emph{dynamically distinguishable} mod $p$ if $\Gamma_{f,p}\nsimeq\Gamma_{g,p}$ for all $f,g\in\mathcal{F}$ with $f\neq g$.
Let $\mu$ be the natural density on $\mathcal{P}$; that is, for any subset $P\subseteq\mathcal{P}$,
\[
\mu\lp P\rp:=\lim_{X\to\infty}
{\frac{\lv\lb p\in\mathcal{P}
\mid p\leq X\text{ and }p\in P\rb\rv}
{\lv\lb p\in\mathcal{P}\mid p\leq X\rb\rv}}
\hspace{20px}\text{(if this limit exists)}.
\]
In \hyperref[puttingitalltogether]{Section~\ref*{puttingitalltogether}}, we prove the following theorem.
\begin{final}\label{final}
Let $k\geq 2$ be an integer. For any $\epsilon>0$ and any $M\in\Z_{>0}$, there exist infinitely many sets of integers $\mathcal{M}$ of size $M$ such that
\[
\mu\lp\lb p\in\mathcal{P}
\mid\lb x^k+m\mid m\in\mathcal{M}\rb\text{ is dynamically distinguishable}\imod{p}\rb\rp
>1-\epsilon.
\]
\end{final}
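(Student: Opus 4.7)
The plan is to distinguish the functional graphs $\Gamma_{x^k+m_i,p}$ for $i=1,\ldots,M$ by comparing numerical invariants extracted from their $\F_p$-periodic point structure. Up to a small and controllable error, the number of exact-period-$n$ points of $x^k+m$ acting on $\F_p$ equals the number of $\F_p$-rational roots of the $n$-th dynatomic polynomial $\Phi_n^*(x^k+m,x)\in\Z[x]$, so the theorem reduces to controlling how these dynatomic polynomials split modulo $p$ as $m$ varies. I would fix an integer $n$ (to be chosen later as a function of $k$, $M$, $\epsilon$) and work with $\Phi_n^*$ throughout.

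The generalization of Morton's theorem stated earlier in the paper identifies $\Gal(\Phi_n^*(x^k+m,x)/\Q)$ with a prescribed wreath product $W_{k,n}$ for generic $m\in\Z$. An iterated application of Hilbert's irreducibility theorem then produces $M$-tuples $(m_1,\ldots,m_M)\in\Z^M$---in fact, infinitely many, since the HIT bad set is thin at each step---for which the Galois group of the compositum of the $M$ splitting fields of the $\Phi_n^*(x^k+m_i,x)$ is the full direct product $W_{k,n}^M$. By Chebotarev's density theorem, for any such $M$-tuple the Frobenius elements at a random unramified prime $p$ form a uniform $M$-tuple in $W_{k,n}^M$, so the $M$-tuple of $\F_p$-root counts of the $\Phi_n^*(x^k+m_i,x)$ is distributed as $(|\Fix(g_1)|,\ldots,|\Fix(g_M)|)$ for independent uniform $g_i\in W_{k,n}$ acting on the $\Phi_n^*$-roots.

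It now suffices to choose $n$ large enough that the probability that these $M$ fixed-point counts are pairwise distinct exceeds $1-\epsilon$: then for a set of primes $p$ of density $>1-\epsilon$ the period-$n$ counts of the $x^k+m_i$ are pairwise distinct, and therefore the functional graphs $\Gamma_{x^k+m_i,p}$ are pairwise non-isomorphic. This probability estimate is where the paper's \emph{rencontres}-style combinatorial lemma enters: the distribution of fixed points on the wreath product $W_{k,n}$ must be computed via a generalization of the classical rencontres/derangement enumeration, and the union bound over the $\binom{M}{2}$ pairs must be kept below $\epsilon$.

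The main obstacle is this last probability estimate. A random element of $W_{k,n}\cong C_n\wr S_\nu$ acting on the $n\nu$ roots of $\Phi_n^*$ has a fixed-point count with bounded mean and Poisson-like tail that is highly concentrated at $0$ for large $n$, so a naive use of a single large $n$ risks collapsing all $M$ samples to the same value (typically zero) and thereby failing to separate the polynomials. I expect the argument to thread this needle either via a cycle-type invariant richer than bare fixed-point count, via a joint invariant formed from several periods $n_1,\ldots,n_r$, or via a delicate choice balancing $\nu$ against $n$ so that the target distribution retains enough support; executing this combinatorial estimate for wreath products and performing the simultaneous tuning of $n$, $M$, and $\epsilon$ is the technical heart of the proof.
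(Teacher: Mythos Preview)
Your framework matches the paper's: dynatomic polynomials, wreath-product Galois groups via Morton, specialization via Hilbert irreducibility, and Frobenius/Chebotarev to translate Galois statistics into densities of primes.  Where your proposal remains incomplete is exactly the point you flag yourself: with a single period $n$ and the invariant ``number of $\F_p$-roots of $\Phi^*_n(x^k+m,x)$'', the distribution of $|\Fix(g)|/n$ for uniform $g\in C_n\wr S_{r_k(n)}$ is essentially Poisson with mean $1/n$, so for large $n$ almost all samples are $0$ and the $M$ values collide; for small $n$ the collision probability is bounded away from $0$ and cannot be pushed below any prescribed $\epsilon$.  The option ``delicate choice balancing $\nu$ against $n$'' is not available, since $\nu=r_k(n)$ is determined by $k$ and $n$.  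So as written the proposal stops short of a proof.

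The paper resolves this via your second suggested escape route, but with two specific choices you did not make.  First, it uses the coarser yes/no invariant ``does $\Phi_{f,n}$ have a root in $\F_p$'' (equivalently, does $\Gamma_{f,p}$ have an $n$-cycle) rather than the root count; the relevant wreath-product statistic is then $P_k(n)=\Pr(\Fix g\neq\emptyset)\approx 1-e^{-1/n}$.  Second, it assigns to each unordered pair $J=\{j,j'\}\subseteq\{1,\ldots,M\}$ its own arithmetic progression of periods $A_J$, and declares the pair distinguished at $p$ if for \emph{some} period $b$ in $A_J$ exactly one of the two graphs has a $b$-cycle.  The probability this happens is governed by the recurrence $s_{A_J,n}=s_{A_J,n-1}+(1-s_{A_J,n-1})\cdot 2P_k(b_n)(1-P_k(b_n))$, and since $P_k(b)(1-P_k(b))\sim 1/b$ and $\sum_{b\in A_J}1/b=\infty$, the paper's lemma gives $s_{A_J,n}\to 1$.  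Choosing $N_0$ so that each $s_{A_J,N_0}>1-\epsilon/\binom{M}{2}$, arranging (via the generalized Morton/HIT step) that all the relevant splitting fields are linearly disjoint, and multiplying over the $\binom{M}{2}$ pairs gives the conclusion.  Thus the paper's ``combinatorial heart'' is not a rencontres estimate on fixed-point \emph{counts} but the divergence of $\sum 1/b$ along an arithmetic progression together with the fixed-point-\emph{existence} estimate $P_k(n)\approx 1-e^{-1/n}$.
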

\noindent Establishing the truth of the suggestion of~\cite{Ketal} mentioned above would immediately produce the $k=2$ case of \hyperref[final]{Theorem~\ref*{final}} as a weaker corollary.

For any $f,g\in\Z[x]$ and $p\in\mathcal{P}$, the dynamical systems $\lp[f]_p,\F_p\rp$ and $\lp[g]_p,\F_p\rp$ are isomorphic in the category of dynamical systems on the set $\F_p$ if and only if $f$ and $g$ are dynamically indistinguishable mod $p$.
In more generality, for any set $S$ and set maps $f,g:S\to S$, note that $\Gamma(S,f)\simeq\Gamma(S,g)$ if and only if there exists a bijective set map $\varphi:S\to S$ such that $\varphi\circ f=g\circ \varphi$.
In many settings, researchers study subcategories of the category of dynamical systems on the set $S$ by insisting that the maps $f,g$, and $\varphi$ belong to the set of morphisms in an appropriate category containing $S$ as an object.
For example, suppose $K$ is a field, $S=\P^1(K)$, and $f,g:S\to S$ are rational functions.
Then in the subcategory of dynamical systems of $\P^1(K)$, with the self-maps of $\P^1(K)$ restricted to rational maps, the dynamical systems $(\P^1(K),f)$ and $(\P^1(K),g)$ are isomorphic if and only if there exists a M\"obius transformation $\varphi$ such that $\varphi\circ f=g\circ \varphi$.
Fixing an integer $d\in\Z_{>1}$, setting $\mathcal{F}$ to be rational functions of degree $d$, and studing $M\lp\P^1(K),\mathcal{F}\rp$ leads to an interesting moduli space problem, one studied by Silverman in~\cite{SilvermanModuli} using geometric invariant theory.
See~\cite{BCE},~\cite{DeMarco}, and ~\cite{Levy} for further work on this problem and extensions of it.


To prove \hyperref[final]{Theorem~\ref*{final}}, we will distinguish dynamical systems by their periodic points.
If $(S,f)$ is a dynamical system, let $f^n=(\overbrace{f\circ\cdots\circ f}^{n\text{ times}})$ for any $n\in\Z_{>0}$.
If $s\in S$ has the property that there is some $n\in\Z_{>0}$ with $f^n(s)=s$, we say that $s$ is \emph{periodic} or a \emph{periodic point} of $(S,f)$.
The smallest such $n$ is the \emph{period} of $s$.
As is standard, we will also refer to points of period one as \emph{fixed points}.
Points of period $n$ are precisely those that lie in cycles of length $n$ in the graph $\Gamma(S,f)$. 
Periodic points are a classical object of study in discrete dynamical systems over $\C$, going back at least to work of Fatou~\cite{Fatou1,Fatou2} and Julia~\cite{Julia} in the early 20th century. Recently there has been much work on statistics of periodic points in families of dynamical systems over finite fields, partially motivated by an attempt started by Bach~\cite{Bach} to make rigorous the heuristic assumptions in Pollard's ``rho method" for integer factorization~\cite{Pollard}. For example, in~\cite{FG}, Flynn and the second author prove that for the family of polynomials in $\F_q[x]$ of a fixed degree $d$, the average number of cycles in their associated functional graphs is at least $\frac{1}{2}\log{q}-4$, as long as $d\geq\sqrt{q}$.
More recently, Bellah, the second author, et.\ al.~\cite{BGTW} develop a heuristic that implies that this average is $\frac{1}{2}\log{q}+O(1)$ for any $d$.
In~\cite{BS}, Burnette and Schmutz prove, for this same family of polynomials, that if $d=o\lp\sqrt{q}\rp$ as $d,q\to\infty$, then the average ``ultimate period'' of the associated functional graphs is at least $\frac{d}{2}\lp1+o(1)\rp$.

Our proof of \hyperref[final]{Theorem~\ref*{final}} relies on the trivial observation that for any $n\in\Z_{>0}$, if one directed graph has a cycle of length $n$ and another does not, then the graphs are not isomorphic.
As an illustration of our approach, consider the following example.
\begin{fixedpoints}\label{fixedpoints}
Let $f=x^2+1$ and $g=x^2+2$.
If $p\in\mathcal{P}$, then $\Gamma_{f,p}$ has a point of period one if and only if there exists $\alpha\in\F_p$ such that
\[
0=[f]_p(\alpha)-\alpha=\alpha^2+1-\alpha.
\]
Now, such an $\alpha$ exists if and only if the prime ideal $\lp p\rp\subseteq\Z$ splits (or ramifies) in the splitting field of $f(x)-x=x^2-x+1$ (over $\Q$).
Similarly, $\Gamma_{g,p}$ has a fixed point if and only if $\lp p\rp$ splits (or ramifies) in the splitting field of $g(x)-x$.
Let $K_f$ and $K_g$ be the splitting fields of $f(x)-x$ and $g(x)-x$, respectively.
The \ref{FDT} implies that the natural density of primes that split in $K_f$ and $K_g$ is the proportion of their Galois groups that fix a root of the polynomials whose roots we adjoin (that is, a root of $f(x)-x$ and $g(x)-x$, respectively).
Since $\Gal{\lp K_f/\Q\rp}\simeq\Gal{\lp K_g/\Q\rp}\simeq\Z/2\Z$, the natural density of primes that split in these fields is $\frac{1}{2}$.
Moreover, since $K_f$ and $K_g$ are linearly disjoint, we know that $\Gal{\lp K_fK_g/\Q\rp}\simeq\Z/2\Z\times\Z/2\Z$; thus, when we apply the theorem to the polynomial $\lp f(x)-x\rp\lp g(x)-x\rp$, we see that the splitting behavior of prime ideals in these two fields is independent.
%
%
That is,
\begin{align*}
\mu\lp p\in\mathcal{P}\mid\lb f,g\rb\text{is dynamically distinguishable}\imod{p}\rp\hspace{-200px}\\
&=\mu\lp p\in\mathcal{P}\mid\Gamma_{f,p}\nsimeq\Gamma_{g,p}\rp\\
&\geq\mu\lp p\in\mathcal{P}\mid
\Gamma_{f,p}\text{ has a fixed point and }\Gamma_{g,p}\text{ does not}\rp\\
&\hspace{20px}+\mu\lp p\in\mathcal{P}\mid
\Gamma_{f,p}\text{ does not have a fixed point and }\Gamma_{g,p}\text{ does}\rp\\
&=\frac{1}{2}\lp1-\frac{1}{2}\rp+\lp1-\frac{1}{2}\rp\frac{1}{2}\\
&=\frac{1}{2}.
\end{align*}
\end{fixedpoints}

The goal of this paper is to generalize this argument to points of period greater than one.
However, to produce polynomials in $\Z[x]$ and apply the \ref{FDT}, as in \hyperref[fixedpoints]{Example~\ref*{fixedpoints}}, we must prove several theorems to overcome various obstacles.
Before describing them, we introduce the notational conventions we will use throughout the rest of the paper.
If $F$ is a field and $f\in F[x]$, we will write $\Gal(f/F)$ to denote the Galois group of the splitting field of $f$ over $F$. 
Additionally, if $\mathcal{F}$ is a finite subset of $F[x]$, say with splitting fields $\lb K_f\rb_{f\in\mathcal{F}}$, then we will write $\prod_{f\in\mathcal{F}}{K_f}$ for the splitting field of $\prod_{f\in\mathcal{F}}{f}$.
(Of course, if we choose an algebraic closure of $F$, then $\prod_{f\in\mathcal{F}}{K_f}$ is isomorphic to the compositum of the images of the embeddings of the $K_f$s in that algebraic closure.)
Similarly, for any family of groups $\mathcal{G}$, we will write $\prod_{G\in\mathcal{G}}{G}$ for their direct product (if $\mathcal{G}=\lb G_1,\ldots G_n\rb$ for a positive integer $n$, we will write $G_1\times\cdots\times G_n$ for this group, and if there is some group $G$ such that $G_i=G$ for all $i\in\lb1,\ldots,n\rb$, we will write $G^n$.)
The following fact, which we will use often, relates these conventions: if $F$ is a field and $\mathcal{F}$ is finite subset of $F[x]$, say with splitting fields $\lb K_f\rb_{f\in\mathcal{F}}$, then the members of $\lb K_f\rb_{f\in\mathcal{F}}$ are pairwise $F$-linearly disjoint if and only if
\[
\Gal{\lp\lp\prod_{f\in\mathcal{F}}{K_f}\rp/F\rp}\simeq\prod_{f\in\mathcal{F}}{\Gal{\lp f/F\rp}}.
\]
Now, if $G$ is a group and $S_r$ is the symmetric group on $r$ letters, we write $G\wr S_r$ to mean the wreath product $G\wr_{\{1,\dots,r\}} S_r$.
That is, $G\wr S_r=G^r\rtimes S_r$, where $S_r$ acts on $G^r$ by permuting coordinates.
In particular, we note that $\lv G\wr S_r\rv=r!|G|^r$. See~\cite[Chapter 3A]{IsaacsGroupTheory} for background on the wreath product.
(In \hyperref[wreaths]{Section~\ref*{wreaths}}, we introduce and analyze the aspects of the wreath product that we require for this paper.)

With these notations in hand, we can now describe the path to generalizing \hyperref[fixedpoints]{Example~\ref*{fixedpoints}}.


\begin{itemize}
\item
If $K$ is a field and $f\in K[x]$, then $\alpha\in K$ is a fixed point in $\lp K,f\rp$ if and only if $\alpha$ is a root of $f(x)-x$.
To generalize the argument of \hyperref[fixedpoints]{Example~\ref*{fixedpoints}}, we review the famous ``dynatomic polynomials of $f$'' in \hyperref[galois]{Section~\ref*{galois}}, which we will denote by $\Phi_{f,n}$ for any $n\in\Z_{>0}$.
These polynomials have the property that for any $n\in\Z_{>0}$, every point of period $n$ in $\lp K,f\rp$ is a root of $\Phi_{f,n}$ (in particular, $\Phi_{f,1}=f(x)-x$).
When $K$ is the rational function field $\Q(c)$, Morton~\cite[Theorem~D]{MorGalGroups} proved that if $f(x)=x^k+c$ for some $k\in\Z_{>1}$, then for any $n,n^\prime\in\Z_{>0}$ with $n\neq n^\prime$, the splitting fields of $\Phi_{f,n}$ and $\Phi_{f,n^\prime}$ are linearly disjoint.
In \hyperref[genmorton]{Theorem~\ref*{genmorton}}, we generalize Morton's theorem to prove that for any $k,M,N\in\Z_{>1}$, there exist infinitely many sets of integers $\mathcal{M}$ of size $M$ such that for any $f,g\in\lb x^k+(c+m)\mid m\in\mathcal{M}\rb\subseteq\Q(c)[x]$ and $n,n^\prime$ with $n,n^\prime\leq N$, the splitting fields of $\Phi_{f,n}$ and $\Phi_{g,n^\prime}$ are linearly disjoint.
We point out that this includes the case where $n=n^\prime$, which is quite important for our applications.

\item
In \hyperref[fixedpoints]{Example~\ref*{fixedpoints}}, we set $f(x)=x^2+1$, and applied the \ref{FDT} to $\Gal{\lp\Phi_{f,1}/\Q\rp}\simeq\Z/2\Z$.
In general, the Galois groups of dynatomic polynomials are quite often wreath products of the form $\Z/n\Z\wr S_r$ for $n,r\in\Z_{>0}$.
To apply the \ref{FDT}, we must study the action of these wreath products on the roots of dynatomic polynomials.
In \hyperref[fixedpointsinwreaths]{Theorem~\ref*{fixedpointsinwreaths}}, we prove that for any $n,r\in\Z_{>0}$, the proportion of the group $\Z/n\Z\wr S_r$ (considered with its natural action on $\Z/n\Z\times\lb1,\ldots,r\rb$) that acts with a fixed point is approximately $1-e^{-\frac{1}{n}}$.
\item
In \hyperref[fixedpoints]{Example~\ref*{fixedpoints}}, with $f(x)=x^2+1$, we used the fact that for any $p\in\mathcal{P}$, the polynomial $\lc f(x)-x\rc_p$ has a root if and only if $\lp\F_p,[f]_p\rp$ has a fixed point.
Unfortunately, the picture is not quite so clear for points of period greater than one.
For example, if we let $g(x)=x^2+3$, then $\lc\Phi_{g,2}\rc_5$ has exactly one root (with multiplicity two), which happens to have period one in $\lp\F_5,[g]_5\rp$.
In \hyperref[rootsandcycles]{Corollary~\ref*{rootsandcycles}}, we provide a sufficient condition on $f\in\Z[x]$ and $n\in\Z_{>0}$ that ensures that $\lc\Phi_{f,n}\rc_p$ has a root in $\F_p$ if and only if $\lp\F_p,[f]_p\rp$ has a point of period $n$ for all but finitely many primes $p$.
\item
Finally, in \hyperref[puttingitalltogether]{Section~\ref*{puttingitalltogether}}, we apply the \ref{HIT} to the polynomials produced in \hyperref[genmorton]{Theorem~\ref*{genmorton}} to prove \hyperref[final]{Theorem~\ref*{final}}.
\end{itemize}

\section{Galois groups of dynatomic polynomials}
\label{galois}

As we intend to distinguish dynamical systems by analyzing their periodic points, we will make use of the theory of dynatomic polynomials (and their Galois groups).
See \cite{MP}, \cite{MorCurves} (and the correction in \cite{MorCorrection}), \cite{MorGalGroups}, and ~\cite[Chapter 4.1]{SilvermanADS} for background in this area.
We sketch an introduction, focusing on the aspects of the theory we will use in our results.

Let $K$ be a field, $f\in K[x]$, and $n\in\Z_{>0}$.
The points of period $n$ of the dynamical system $\lp K,f\rp$ are certainly roots of the polynomial $f^n(x)-x$.
However, if $d\in\Z_{>0}$ and $d\mid n$, then this polynomial vanishes on points of period $d$ as well (for example, if $\alpha\in K$ is a fixed point of $(K,f)$, i.e. $f(\alpha)=\alpha$, then $f^n(\alpha)=\alpha$ for all $n\in\Z_{>0}$).
In an attempt to sieve out the points of lower period, one defines the $n$th \emph{dynatomic polynomial of $f$} for any $n\in\Z_{>0}$:
\[
\Phi_{f,n}(x)
:=\prod_{d\mid n}
{\lp f^d(x)-x\rp^{\mu(n/d)}},
\]
where $\mu:\Z_{\geq0}\to\lb-1,0,1\rb$ is the usual M\"obius function.
The fact that 
\[
\prod_{d\mid n} \Phi_{f,n}(x) = f^n(x)-x
\]
follows quickly by applying the M\"obius inversion formula.
As usual, we omit ``$K$'' from the notation ``$\Phi_{f,n}$''; we will always specify the set of coefficients of $f$, so that the field $K$ will be clear from context.
As indicated by its name, the $n$th dynatomic polynomial is analogous to the $n$th cyclotomic polynomial, which vanishes precisely on primitive $n$th roots of unity.
(As mentioned in the discussion following \hyperref[fixedpoints]{Example~\ref*{fixedpoints}}, it turns out that $\Phi_{f,n}$ may occasionally vanish on points of period $d$ for $d<n$: see~\cite[Example 4.2]{SilvermanADS}.
In \hyperref[rootsandcycles]{Corollary~\ref*{rootsandcycles}}, we address this inconvenience.)
We should mention that it is not \emph{a priori} obvious that $\Phi_{f,n}$ is a polynomial.
See~\cite[Theorem~2.5]{MP} for a proof that $\Phi_{f,n}\in K[x]$.
(In particular, if $f\in\Z[x]$ and $f$ is monic, then $\Phi_{f,n}\in\Z[x]$ by Gauss's Lemma.)
The degrees of certain dynatomic polynomials will be important quantities in many computations that follow, so we introduce the following notation.
\begin{rkn}\label{rkn}
For any $n\in\Z_{>0}$ and $k\in\Z_{>1}$, let
\[
r_k(n)=\frac{1}{n}\cdot
\sum_{d\mid n}
{k^d\mu\lp\frac{n}{d}\rp}.
\]
\end{rkn}
\noindent Note that $nr_k(n)$ is the degree (in $x$) of the $n$th dynatomic polynomial of $x^k+c\in\Q(c)[x]$.

As mentioned in \hyperref[fixedpoints]{Example~\ref*{fixedpoints}}, our proof of \hyperref[final]{Theorem~\ref*{final}} relies in part on the knowledge of the structure of the Galois groups of $\Phi_{f,n}$, where $n\in\Z_{>0}$ and $f(x)=x^k+m\in\Z[x]$ for $k\in\Z_{>1}$ and $m\in\Z$.
Moreover, we must find arbitrarily large finite sets of polynomials of this form that have the property that the splitting fields of their dynatomic polynomials are linearly disjoint.
For a specific polynomial $f\in\Z[x]$ of this form and any large $n$, it is difficult to compute the Galois group of $\Phi_{f,n}$, since the degree of $\Phi_{f,n}$ is so large, but---thanks to work of Morton~\cite[Theorem~D]{MorGalGroups}---the Galois groups of $\Phi_{f,n}$ for $f(x)=x^k+c\in\Q(c)[x]$ are known.
The remainder of this section addresses the question of linear disjointness in the function field setting.

We will need the following elementary lemma of field theory.

\begin{automorphismlemma}\label{automorphismlemma}
Let $K$ be a field and let $\sigma\in\Aut(K)$.
Let $f\in K[x]$ be an irreducible polynomial, and let $f^\sigma$ be the polynomial in $K[x]$ obtained by applying $\sigma$ to each of the coefficients of $f$.
Let $L,L^\sigma$ be the splitting fields of $f,f^\sigma$, respectively.
Then $L$ and $L^\sigma$ are isomorphic as fields.
In particular,
\begin{enumerate}
\item $\Gal(f/K)\simeq \Gal(f^\sigma/K)$, and
\item if $K$ is the fraction field of a Dedekind domain and $\mathfrak{p}$ is a prime of $K$, then
\[
\text{$\mathfrak{p}$ ramifies in $L$ if and only if $\sigma(\mathfrak{p})$ ramifies in $L^\sigma$.}
\]
\end{enumerate}
\end{automorphismlemma}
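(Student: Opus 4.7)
The plan is to construct an isomorphism of splitting fields $\tilde\sigma:L\to L^\sigma$ that extends $\sigma$, and then read off both (1) and (2) by transport of structure along $\tilde\sigma$. The construction is the standard lifting of base-field isomorphisms to splitting fields: starting from $\sigma:K\to K$, I would adjoin the roots of $f$ to $K$ one at a time and, at each stage, extend the ambient isomorphism to the new simple algebraic extension, using the fact that $\sigma$ carries the minimal polynomial of the newly adjoined element to the corresponding irreducible factor of $f^\sigma$ over the image extension. Iterating produces $\tilde\sigma:L\to L^\sigma$ with $\tilde\sigma|_K=\sigma$, which is really the only piece of content in the lemma; everything that follows is formal.

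For (1), conjugation $\tau\mapsto\tilde\sigma\tau\tilde\sigma^{-1}$ gives a group homomorphism $\Aut(L)\to\Aut(L^\sigma)$. When $\tau|_K=\mathrm{id}_K$, a direct check shows that the conjugate also fixes $K$ pointwise: for $a\in K$ one has $\tilde\sigma^{-1}(a)=\sigma^{-1}(a)\in K$, which $\tau$ fixes, and then $\tilde\sigma$ sends it back to $a$. Hence conjugation restricts to a homomorphism $\Gal(f/K)=\Gal(L/K)\to\Gal(L^\sigma/K)=\Gal(f^\sigma/K)$, whose inverse is conjugation by $\tilde\sigma^{-1}$, so it is an isomorphism.

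For (2), let $A$ be a Dedekind domain with fraction field $K$; in the applications the automorphism $\sigma$ maps $A$ to itself, so primes of $K$ (in the Dedekind sense) are permuted by $\sigma$. Then $\tilde\sigma$ carries the integral closure of $A$ in $L$ onto the integral closure of $A$ in $L^\sigma$ and sends a prime $\mathfrak{P}$ of $L$ lying above $\mathfrak{p}$ to a prime $\tilde\sigma(\mathfrak{P})$ of $L^\sigma$ lying above $\sigma(\mathfrak{p})$ with the same ramification index, since $\tilde\sigma$ is an isomorphism of valued fields. Consequently, $\mathfrak{p}$ ramifies in $L$ if and only if some $\mathfrak{P}\mid\mathfrak{p}$ has $e(\mathfrak{P}/\mathfrak{p})>1$, which is if and only if $\sigma(\mathfrak{p})$ ramifies in $L^\sigma$.

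I do not anticipate significant difficulty here: the lemma is essentially bookkeeping in transport of structure, and the only subtlety worth flagging is the implicit compatibility of $\sigma$ with the Dedekind subring that (2) tacitly assumes, which is harmless in the applications where $K=\Q(c)$ and $\sigma$ is translation in $c$.
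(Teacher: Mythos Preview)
Your proposal is correct and follows essentially the same approach as the paper: construct an extension of $\sigma$ to a field isomorphism $L\to L^\sigma$, then obtain (1) by conjugation and (2) by transport of ramification indices. The only cosmetic difference is that the paper obtains the extension by first lifting $\sigma$ to an automorphism of a common algebraic closure $\overline{K}\supseteq L,L^\sigma$ (citing Lang) and then restricting, whereas you build the extension inductively by adjoining roots; these are equivalent standard constructions and lead to the same argument.
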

\begin{proof}
Let $\overline{K}$ be an algebraic closure of $K$ containing both $L$ and $L^\sigma$.
Then we can extend $\sigma\in\Aut(K)$ to some automorphism $\widehat{\sigma}\in\Aut(\overline{K})$~\cite[Theorem V.2.2.8]{Lang}.
It is easy to see that $\widehat{\sigma}$ furnishes a one-to-one correspondence between the roots of $f$ and the roots of $f^{\sigma}$; thus $\widehat{\sigma}|_L:L\to L^\sigma$ is an isomorphism. Statement (1) follows immediately, and the map from $\Gal(L/K)$ to $\Gal(L^\sigma/K)$ is given by
\[
\tau\mapsto
\widehat{\sigma}^{\hspace{1px}-1}\circ\tau\circ\widehat{\sigma}.
\]
For (2), if the prime $\mathfrak{p}$ of $K$ ramifies in $L$, there is a prime $\mathfrak{q}$ of $L$ with $e(\mathfrak{q}/\mathfrak{p})>1$, and 
\[
e(\widehat{\sigma}(\mathfrak{q})/\sigma(\mathfrak{p}))=e(\widehat{\sigma}(\mathfrak{q})/\widehat{\sigma}(\mathfrak{p}))=e(\mathfrak{q}/\mathfrak{p})>1,
\]
so $\sigma(\mathfrak{p})$ ramifies in $L^\sigma$. Replacing $\widehat{\sigma}$ by its inverse shows that the converse holds as well.
\end{proof}

For the rest of this section, we will work with polynomials $f(x)\in\Q(c)[x]$.
For any $n\in\Z_{>0}$, let
\begin{itemize}
\item $\Sigma_{f,n}$ denote the splitting field of $\Phi_{f,n}$, and
\item $K_{f,n}$ denote the splitting field of $f^n(x)-x$.
\end{itemize}
These splitting fields will be defined over $\Q(c)$ or $\overline{\Q}(c)$, depending on context. There should be no ambiguity about which definition is intended. Note that in either case, $K_{f,n}$ is the compositum of the fields $\Sigma_{f,d}$ for all positive integers $d$ dividing $n$. 

The next theorem generalizes the first part of Theorem~D in~\cite{MorGalGroups}.


\begin{genmorton}\label{genmorton}
Let $k\geq 2$ be an integer and $f=f(x) = x^k+c\in\Q(c)[x]$.
Suppose that $M,N\in\Z_{>0}$.
Then there exist infinitely many $M$-tuples of integers $(m_1,\ldots m_M)\in\Z^M$ such that
\[
\Gal{\lp\lp\prod_{i=1}^M{K_{f+m_i, N}}\rp\bigg/\overline{\Q}(c)\rp}
\simeq\prod_{i=1}^M{\Gal{\lp K_{f+m_i, N}\hspace{1px}\big/\hspace{1px}\overline{\Q}(c)\rp}}
\]
\end{genmorton}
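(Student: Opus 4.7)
The plan is to exploit the $\overline{\Q}$-automorphism $\sigma_m:\overline{\Q}(c)\to\overline{\Q}(c)$ defined by $c\mapsto c+m$, which sends $f=x^k+c$ to $f+m=x^k+(c+m)$ and, when extended to $\overline{\Q}(c)[x]$ by fixing $x$, commutes with composition, so that $\sigma_m(f^N(x)-x)=(f+m)^N(x)-x$. Applying \hyperref[automorphismlemma]{Lemma~\ref*{automorphismlemma}} to the irreducible factors of $f^N(x)-x$, one sees that $\sigma_m$ extends to an isomorphism $K_{f,N}\to K_{f+m,N}$ of splitting fields, and that a maximal ideal $(c-\alpha)$ of $\overline{\Q}[c]$ ramifies in $K_{f,N}/\overline{\Q}(c)$ if and only if $\sigma_m((c-\alpha))=(c-(\alpha-m))$ ramifies in $K_{f+m,N}/\overline{\Q}(c)$. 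Setting
\[
R:=\lb\alpha\in\overline{\Q}\mid(c-\alpha)\text{ ramifies in }K_{f,N}/\overline{\Q}(c)\rb,
\]
which is finite because $K_{f,N}/\overline{\Q}(c)$ is finite, the finite-place ramification locus of $K_{f+m,N}/\overline{\Q}(c)$ is exactly the translate $R-m$.

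Next, choose $m_1,\ldots,m_M\in\Z$ with $m_i-m_j\notin R-R$ for all $i\neq j$; since $(R-R)\cap\Z$ is finite, this is easily arranged (for instance by taking $m_i=iD$ for $D$ a sufficiently large integer), and every integer translate $(m_1+t,\ldots,m_M+t)$ satisfies the same condition, which produces infinitely many valid $M$-tuples. This choice forces $R-m_1,\ldots,R-m_M$ to be pairwise disjoint. Linear disjointness now follows by induction on $M$: letting $L^{(i)}:=\prod_{j=1}^{i}K_{f+m_j,N}$ and assuming the isomorphism $\Gal(L^{(i)}/\overline{\Q}(c))\simeq\prod_{j=1}^{i}\Gal(K_{f+m_j,N}/\overline{\Q}(c))$, the Galois intersection $L^{(i)}\cap K_{f+m_{i+1},N}$ is a subextension of both $L^{(i)}$ and $K_{f+m_{i+1},N}$, so its finite-place ramification locus is contained in
\[
\Bigl(\bigcup_{j\leq i}(R-m_j)\Bigr)\cap(R-m_{i+1})=\emptyset.
\]
Hence this intersection corresponds to a finite étale cover of $\A^1_{\overline{\Q}}$, which must be trivial since $\A^1$ over an algebraically closed field of characteristic zero is simply connected; therefore $L^{(i)}\cap K_{f+m_{i+1},N}=\overline{\Q}(c)$, and the standard fact that Galois extensions with trivial intersection are linearly disjoint promotes the inductive hypothesis to step $i+1$.

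The main obstacle is that $\sigma_m$ fixes the place at infinity, so all of the $K_{f+m_i,N}$ necessarily share whatever ramification $K_{f,N}$ has at $\infty$, and one cannot hope to separate them there by translation. The passage from finite-place disjointness to genuine linear disjointness must therefore come entirely from the geometry of $\A^1_{\overline{\Q}}$, and the substantive input is the classical fact that $\A^1$ has trivial étale fundamental group in characteristic zero---equivalently, any nontrivial finite Galois cover of $\P^1$ in characteristic zero is ramified at at least two points. Once that input is acknowledged the rest of the argument is a routine ramification bookkeeping via \hyperref[automorphismlemma]{Lemma~\ref*{automorphismlemma}} together with the elementary combinatorics of choosing the $m_i$.
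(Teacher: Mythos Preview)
Your proof is correct and follows essentially the same strategy as the paper: transport the finite ramification locus of $K_{f,N}/\overline{\Q}(c)$ via the automorphism $c\mapsto c+m$, choose the $m_i$ so that the translated loci are pairwise disjoint, and deduce linear disjointness from the fact that a finite Galois extension of $\overline{\Q}(c)$ unramified at all finite places is trivial. The only notable differences are that the paper invokes Morton's explicit description of the ramification locus (via the polynomials $\delta_n$) rather than just its finiteness, and that the paper states the ``disjoint finite ramification $\Rightarrow$ linearly disjoint'' step without justification, whereas you correctly identify the simply-connectedness of $\A^1_{\overline{\Q}}$ as the underlying input.
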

\begin{proof}

Following the proof of Theorem~10 in~\cite{MorGalGroups}, for any $n\in\Z_{>0}$, there exists a polynomial $\delta_n(x)\in\Z[x]$ such that the finite primes in $\overline{\Q}(c)$ that ramify in $\Sigma_{f,n}$ have the form $c-b$, where $b\in\overline{\Q}$ satisfies $\delta_n(b)=0$. The roots of $\delta_n(x)$ are the roots of the hyperbolic components of the degree-$k$ Multibrot set, which is the famous Mandelbrot set when $k=2$.
It is a consequence of the structure of the Multibrot set that $\delta_n(x)$ and $\delta_d(x)$ have no roots in common if $d\neq n$ (closures of hyperbolic components of different periods may only intersect at a root of the component of higher period, see~\cite{Branner} and~\cite{Schleicher}.)
For any $m\in\Z$, consider the unique $\sigma\in\Aut(\overline{\Q}(c)/\overline{\Q})$ defined by $\sigma(c)=c+m$. Then $f+m=f^\sigma$ in the notation of \hyperref[automorphismlemma]{Lemma~\ref*{automorphismlemma}}, so the primes that ramify in $\Sigma_{f+m,n}$ have the form $c-\lp b-m\rp$, where $b\in\overline{\Q}$ satisfies $\delta_n(b)=0$. 

With the above facts in mind, let $R$ be the (finite) set
\[
\lb b\in\overline{\Q}\mid\text{there exists }d\in\Z_{>0}\text{ such that }d\mid N\text{ and }\delta_d(b)=0\rb,
\]
then choose $\lp m_1,\ldots,m_M\rp\in\Z^M$ such that the sets $\lb R-m_i\rb$ are pairwise disjoint.
As $R$ is a finite set, there are infinitely many such choices.
For any $i\in\lb1,\ldots,M\rb$, let
\[
\mathcal{F}=\lb\Sigma_{f+m_i,d}
\,\,\Big\vert\,\,d\in\Z_{>0}\text{ with }d\mid N\rb
\hspace{10px}\text{and}\hspace{10px}
\mathcal{G}=\lb\Sigma_{f+m_{j},d}
\,\,\Big\vert\,\,\substack{
j\in\lb1,\ldots,M\rb
\text{ with }j\neq i\\
d\in\Z_{>0}\text{ with }d\mid N}\rb.
\]
Recall that for any $m\in\Q$ and $n\in\Z_{>0}$, we have $K_{f+m,n}=\prod_{d\mid n} \Sigma_{f+m,d}$. Thus 
\[
\prod_{F\in\mathcal{F}}F= K_{f+m_i,N}\hspace{10px}\text{ and }\hspace{10px}
\prod_{F\in\mathcal{G}}F = \prod_{\substack{j=1 \\ j\neq i}}^M K_{f+m_j,N}.
\]
By our choice of the $m_i$s, these two fields have no finite ramified primes in common, so they are linearly disjoint over $\overline{\Q}(c)$. Therefore the fields $K_{f+m_1,N},\dots,K_{f+m_M,N}$ are linearly disjoint over $\overline{\Q}(c)$.
The result now follows by elementary Galois theory.\end{proof}

The corollary below follows immediately from \hyperref[genmorton]{Theorem~\ref*{genmorton}} and by work of Morton.
It will be crucial in the proof of \hyperref[final]{Theorem~\ref*{final}}.

\begin{QbarQ}\label{QbarQ}
Keep the same hypotheses as \hyperref[genmorton]{Theorem~\ref*{genmorton}}, and for any $\mathbf{m}=\lp m_1,\ldots,m_M\rp\in\Z^M$, let
\[
\mathcal{F}\lp\mathbf{m}\rp
=\lb\Sigma_{f+m_i,d}\mid i\in\lb1,\ldots,M\rb\text{and }d\in\Z_{>0}\text{ such that }d\mid N\rb.
\]
Then there exist infinitely many $\mathbf{m}\in\Z^M$ such that
\begin{itemize}
\item
any field in $\mathcal{F}\lp\mathbf{m}\rp$ is linearly disjoint from the compositum of the others, 
\item
if $\Sigma_{f+m_i,d}\in\mathcal{F}(\mathbf{m})$, then $\Gal{\lp \Sigma_{f+m_i,d}/\Q(c)\rp}\simeq\Gal{\lp \Sigma_{f+m_i,d}/\overline{\Q}(c)\rp}\simeq\lp\Z/d\Z\wr S_{r_k(d)}\rp$, and
\item
$\Gal{\lp\lp\prod_{i=1}^M{K_{f+m_i, N}}\rp\Big/\Q(c)\rp}\simeq \prod_{i=1}^N{\prod_{d\mid N}{\lp\Z/d\Z\wr S_{r_k(d)}\rp}}$.
\end{itemize}
(Recall that $dr_k(d)$ is the degree of the $d$th dynatomic polynomial of $f(x)$, see \hyperref[rkn]{Definition~\ref*{rkn}}.)
\end{QbarQ}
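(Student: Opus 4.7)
The plan is to combine Theorem \ref{genmorton} with Morton's explicit computation of the Galois groups of dynatomic polynomials over $\Q(c)$, and to upgrade linear disjointness from $\overline{\Q}(c)$ to $\Q(c)$ via a degree-comparison argument.

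First I would fix any of the infinitely many tuples $\mathbf{m} = (m_1,\ldots,m_M)$ produced by Theorem \ref{genmorton}, so that the splitting fields $K_{f+m_i,N}$ over $\overline{\Q}(c)$ are pairwise linearly disjoint over $\overline{\Q}(c)$. Theorem~D of \cite{MorGalGroups} asserts, for $f = x^k + c$ and each $d \in \Z_{>0}$, that $\Gal(\Sigma_{f,d}/\Q(c))$ and $\Gal(\Sigma_{f,d}/\overline{\Q}(c))$ are both isomorphic to $\Z/d\Z \wr S_{r_k(d)}$, and that the fields $\Sigma_{f,d}$ for $d \mid N$ are pairwise linearly disjoint over $\Q(c)$. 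Applying Lemma \ref{automorphismlemma} to the automorphism $\sigma \in \Aut(\overline{\Q}(c)/\overline{\Q})$ given by $c \mapsto c + m_i$ transfers these conclusions to $f + m_i = f^\sigma$ for every $i$, yielding the second bullet and the identification $\Gal(K_{f+m_i,N}/\Q(c)) \simeq \prod_{d \mid N}(\Z/d\Z \wr S_{r_k(d)})$.

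Next I would promote linear disjointness of the $K_{f+m_i,N}$ from $\overline{\Q}(c)$ down to $\Q(c)$. Writing $K_i$ for the splitting field of $(f+m_i)^N(x) - x$ over $\Q(c)$, the previous step gives $[K_i : \Q(c)] = [K_i \cdot \overline{\Q}(c) : \overline{\Q}(c)]$. Combined with Theorem \ref{genmorton} and the compositum identity $(\prod_i K_i) \cdot \overline{\Q}(c) = \prod_i(K_i \cdot \overline{\Q}(c))$, this yields
\[
\Bigl[\Bigl(\prod_i K_i\Bigr) \cdot \overline{\Q}(c) : \overline{\Q}(c)\Bigr] = \prod_{i=1}^M [K_i : \Q(c)].
\]
The tower-law inequality $[\prod_i K_i : \Q(c)] \geq [(\prod_i K_i) \cdot \overline{\Q}(c) : \overline{\Q}(c)]$ combined with the trivial $[\prod_i K_i : \Q(c)] \leq \prod_i [K_i : \Q(c)]$ forces equality throughout, establishing linear disjointness of the $K_i$ over $\Q(c)$ and producing the third bullet. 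The first bullet then follows from the third, since a direct-product Galois group forces any single $\Sigma_{f+m_i,d} \in \mathcal{F}(\mathbf{m})$ to be linearly disjoint from the compositum of the others.

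The main (modest) obstacle is careful bookkeeping of base fields: specifically, recognizing that Morton's equality of Galois-group orders over $\Q(c)$ and $\overline{\Q}(c)$ is precisely what makes the degree-comparison upgrade go through, and ensuring that the identification $\Gal(K_{f+m_i,N}/\Q(c)) \simeq \prod_{d \mid N}(\Z/d\Z \wr S_{r_k(d)})$ is correctly assembled into the direct-product structure for the full compositum.
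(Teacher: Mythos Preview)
Your proposal is correct and follows essentially the same approach as the paper: both invoke Morton's results to identify each $\Gal(\Sigma_{f+m_i,d})$ over $\Q(c)$ and over $\overline{\Q}(c)$ as the wreath product (transferred from $f$ to $f+m_i$ via Lemma~\ref{automorphismlemma}), use Theorem~\ref{genmorton} for linear disjointness over $\overline{\Q}(c)$, and then sandwich the $\Q(c)$-Galois group between the direct product (as an upper bound from standard Galois theory) and the $\overline{\Q}(c)$-Galois group (as a lower bound). The only cosmetic differences are that you phrase the sandwich as a degree count rather than as subgroup containment, and you cite Morton's Theorem~D where the paper cites Theorems~9 and~B of the same reference.
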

\begin{proof}
Theorem 9 in~\cite{MorGalGroups} shows that $f(x)=x^k+c\in\Q(c)[x]$ satisfies the assumptions of Theorem B in the same paper, which proves that for any $n\in\Z_{>0}$, both $\Gal(\Phi_{f,n}/\Q(c))$ and $\Gal(\Phi_{f,n}/\overline{\Q}(c))$ are isomorphic to $\Z/d\Z\wr S_{r_k(d)}$.
Applying \hyperref[automorphismlemma]{Lemma~\ref{automorphismlemma}}, with $\sigma:c\mapsto c+m$, we see  that the same is true of the Galois group of $\Phi_{f+m,n}$ for any $m\in\Q$.

Let $\mathbf{m}=\lp m_1,\ldots,m_M\rp$ be any of the (infinitely many) $M$-tuples that satisfy the conclusion of \hyperref[genmorton]{Theorem~\ref*{genmorton}}.
From the proof of \hyperref[genmorton]{Theorem~\ref*{genmorton}}, we know that if $i,j$ are distinct integers in $\lb1,\ldots,M\rb$ and $d$ is a positive integer divisor of $N$, then $\Sigma_{f+m_i,d}$ and $\Sigma_{f+m_j,d}$ are linearly disjoint over $\overline{\Q}(c)$. Thus
\begin{align*}
\Gal{\lp\lp\prod_{i=1}^M
{K_{f+m_i, N}}\rp\bigg/\overline{\Q}(c)\rp}
&\simeq\prod_{i=1}^M
{\Gal{\lp K_{f+m_i, N}
\hspace{1px}\big/\hspace{1px}
\overline{\Q}(c)\rp}}\\
&\simeq\prod_{i=1}^M
{\prod_{d\mid N}
{\Gal{\lp\Sigma_{f+m_i, d}
\hspace{1px}\big/\hspace{1px}
\overline{\Q}(c)\rp}}}\\
&\simeq\prod_{i=1}^M
{\prod_{d\mid N}
{\lp\Z/d\Z\wr S_{r_k(d)}\rp}}
\end{align*}
Let $G=\Gal{\lp\lp\prod_{i=1}^M{K_{f+m_i, N}}\rp\bigg/\Q(c)\rp}$.
By Theorem~B from~\cite{MorGalGroups} again, we know $G$ is isomorphic to a subgroup of $\prod_{i=1}^M{\prod_{d\mid N}{\lp\Z/d\Z\wr S_{r_k(d)}\rp}}$.
Conversely, since $\overline{\Q}(c)$ contains $\Q(c)$, we see that $\prod_{i=1}^M{\prod_{d\mid N}{\lp\Z/d\Z\wr S_{r_k(d)}\rp}}$ is isomorphic to a subgroup of $G$, so the proof is complete.
\end{proof}

\section{Fixed point proportions in wreath products}
\label{wreaths}

In this section, we analyze some statistics of a certain family of wreath products.
As these groups appear as Galois groups of dynatomic polynomials, these statistics are a vital component of our proof of \hyperref[final]{Theorem~\ref*{final}}.
We begin with some definitions.

Suppose that $n,r\in\Z_{>0}$.
Recall the definition of $\Z/n\Z\wr S_r$ from the end of \hyperref[intro]{Section~\ref*{intro}}. Let $B(n,r)$ denote $\Z/n\Z\times\lb1,\ldots,r\rb$. The group $\Z/n\Z\wr S_r$ acts on the set $B(n,r)$; concretely, for any $\sigma=\lp\lp\overline{a_1},\dots,\overline{a_r}\rp,\pi\rp\in\Z/n\Z\wr S_r$, this action is
\begin{align*}
\sigma:B(n,r)&\to B(n,r)\\
\lp\overline{b},i\rp&\mapsto\lp\overline{b+a_i},\pi(i)\rp.
\end{align*}
For any $\sigma\in\Z/n\Z\wr S_r$, define

\[\Fix{\sigma}=\lb\lp\overline{b},i\rp\in B(n,r)
\,\,\big\vert\,\,\sigma\lp\overline{b},i\rp=\lp\overline{b},i\rp\rb;
\]
then we set
\[
P_{r,n}=\frac{\lv\lb\sigma\in\Z/n\Z\wr S_r\mid\Fix{\sigma}\neq\emptyset\rb\rv}{\lv\Z/n\Z\wr S_r\rv}.\]

In many cases, this action matches the action of the Galois groups of dynatomic polynomials on the roots of those polynomials, so we make the following definition.
\begin{Pkn}\label{Pkn}
For any $k\in\Z_{>1}$ and $n\in\Z_{>0}$, let
\[
P_k(n)=P_{r_k(n),n},
\]
where $r_k(n)=\sum_{d\mid n}
{k^d\mu\lp\frac{n}{d}\rp}$ as in \hyperref[rkn]{Definition \ref*{rkn}}.
\end{Pkn}

\begin{GaloisWreathActions}\label{GaloisWreathActions}
When we apply the results of this section in the proof of \hyperref[final]{Theorem~\ref*{final}}, the groups $\Z/n\Z\wr S_{r_k(n)}$ will be isomorphic to the groups $\Gal(\Phi_{f,n}/\Q)$ in a setting where $f\in\Z[x]$ and the roots of $\Phi_{f,n}$ are exactly the $nr_k(n)$ points of period $n$ in $\lp\overline{\Q},f\rp$.
In this setting, we can identify $B\lp n,r_k(n)\rp$ with the union of the $r_k(n)$ cycles of length $n$ in $(\overline{\Q},f)$ in such a way that the permutation action of $\Gal{\lp\Phi_{f,n}/\Q\rp}$ on the roots of $\Phi_{f,n}$ is precisely the action of $\Z/n\Z\wr S_{r_k(n)}$ on $B(n,r)$ described above (see Section 4 of \cite{MP} for details). 
In particular, in the proof of \hyperref[final]{Theorem~\ref*{final}}, we will exploit the fact that
\[
P_k(n) = \frac{\lv\sigma\in\Gal(\Phi_{f,n}/\Q)\mid\text{$\sigma$ fixes a root of $\Phi_{f,n}$}\rv}{\lv\Gal(\Phi_{f,n}/\Q)\rv}
\]
for the polynomials $f\in\Z[x]$ and integers $n\in\Z_{>0}$ under consideration.
\end{GaloisWreathActions}

Now, the Galois groups in the conclusion of \hyperref[QbarQ]{Corollary~\ref*{QbarQ}} are isomorphic to direct sums of the wreath products defined above.
With this in mind, we need a bit more notation before proceeding---notation whose purpose will become clear in the proof of \hyperref[final]{Theorem~\ref*{final}}.

If $G,H$ are groups acting on sets $B,C$, say with actions $\odot_G,\odot_H$, respectively, define the \emph{product action} of $G\times H$ on $B\times C$ to be the action
\begin{align*}
\lp G\times H\rp\times\lp B\times C\rp
&\to B\times C\\
\lp(g,h),(b,c)\rp&\mapsto(g,h)\odot_{G\times H}(b,c):=\lp g\odot_Gb,h\odot_Hc\rp.
\end{align*}
Suppose $k\in\Z_{>1}$ and let $A=\lp b_i\rp_{i\in\Z_{>0}}$ be any increasing arithmetic progression of positive integers.
For any $i\in\Z_{>0}$, define
\[
W_{A,i}=\Z/b_i\Z\wr S_{r_k\lp b_i\rp}
\times\Z/b_i\Z\wr S_{r_k\lp b_i\rp}
\hspace{10px}\text{and}\hspace{10px}
B_{A,i}= B\lp b_i,r_k\lp b_i\rp\rp
\times B\lp b_i,r_k\lp b_i\rp\rp,
\]
so that $W_{A,i}$ acts on $B_{A,i}$ with the product action defined above.
Next, for any $n\in\Z_{>0}$, let
\[
W_A\lp n\rp=W_{A,1}\times\cdots\times W_{A,n}
\hspace{10px}\text{and}\hspace{10px}
B_A\lp n\rp=B_1\times\cdots\times B_{n};
\]
once again, $W_A\lp n\rp$ acts on $B_A\lp n\rp$ with the product action induced from the action of the $W_{A,i}$s on the $B_{A,i}$s.
In the proof of \hyperref[final]{Theorem~\ref*{final}}, we require knowledge of the proportion of these groups that act with a fixed point.
To begin specifying the quantity we need, we first set, for any $i\in\lb1,\ldots, n\rb$, 
\[
C_{A,i}\lp n\rp=\lb\lp (\sigma_1,\tau_1),\dots,(\sigma_n,\tau_n)\rp\in W_A\lp n\rp\mid
\text{exactly one of $\Fix{\sigma_i},\Fix{\tau_i}$ is empty}\rb.
\]
Let $s_{A,0}=0$. Define 
\[
s_{A,n}=\frac{\lv\bigcup_{i=1}^n{C_{A,i}\lp n\rp}\rv}{\lv W_A\lp n\rp \rv}.
\]
The main technical result of this section is \hyperref[recurrence]{Corollary~\ref*{recurrence}}, which exhibits a recurrence relation on the terms of sequences of the form $s_{A,n}$ and computes the limit of this sequence; the recurrence relation uses the quantities $P_k(b)$, for $b\in A$---these quantities were defined in in \hyperref[Pkn]{Definition~\ref*{Pkn}}.
We defer the proof until the end of the section, after establishing some estimates on fixed-point proportions in wreath products.
\begin{recurrence}\label{recurrence}
If $k\in\Z_{>1}$ and $A=\lp b_i\rp_{i\in\Z_{>0}}$ is any increasing arithmetic progression of positive integers, then for any $n\in\Z_{>0}$,
\[
s_{A,n}=s_{A,n-1}+\lp1-s_{A,n-1} \rp 2P_k\lp b_n\rp\lp1-P_k\lp b_n\rp\rp.
\]
Moreover, $\lim_{n\to\infty} s_{A,n}=1$.
\end{recurrence}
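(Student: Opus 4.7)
The plan has two parts: first establish the recurrence by a standard independence/inclusion-exclusion argument, then deduce the limit by solving the recurrence and estimating the resulting product.

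For the recurrence, I would reinterpret everything probabilistically, thinking of the uniform distribution on $W_A(n)$. Because $W_A(n)=W_{A,1}\times\cdots\times W_{A,n}$ is a direct product, its uniform probability measure is the product of the uniform measures on each factor; in particular, any event depending only on the $i$-th coordinate is independent of any event depending only on the other coordinates. Write $q_n = 2P_k(b_n)(1-P_k(b_n))$. By definition of $P_k(b_i)$ and the product structure of $W_{A,i} = (\Z/b_i\Z\wr S_{r_k(b_i)})^2$, the ratio $|C_{A,i}(n)|/|W_A(n)|$ equals $q_i$, since inside $W_{A,i}$ we are asking for the pair $(\sigma_i,\tau_i)$ to have exactly one coordinate without a fixed point. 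Setting $U_n = \bigcup_{i=1}^n C_{A,i}(n)$, the event $U_{n-1}$ depends only on coordinates $1,\dots,n-1$ while $C_{A,n}(n)$ depends only on coordinate $n$, so they are independent. Inclusion-exclusion then yields
\[
s_{A,n} = s_{A,n-1} + q_n - q_n\, s_{A,n-1} = s_{A,n-1} + (1-s_{A,n-1})\,q_n,
\]
which is the claimed recurrence.

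For the limit, rearranging the recurrence gives $1-s_{A,n} = (1-q_n)(1-s_{A,n-1})$, so by induction $1 - s_{A,n} = \prod_{i=1}^n (1-q_i)$. Each $q_i$ lies in $[0,1/2]$ since $x(1-x)\leq 1/4$, so $1-q_i\geq 1/2$ and the usual estimate $\log(1-q_i)\leq -q_i$ shows that $\prod_{i=1}^n(1-q_i)\to 0$ iff $\sum q_i = \infty$. Now \hyperref[fixedpointsinwreaths]{Theorem~\ref*{fixedpointsinwreaths}} says $P_{r,n}\approx 1 - e^{-1/n}$, and since $r_k(n)$ grows exponentially in $n$ the approximation for $P_k(n) = P_{r_k(n),n}$ is extraordinarily tight; in particular there is a positive constant $c$ such that $P_k(n)\geq c/n$ for all sufficiently large $n$ and $P_k(n)\to 0$. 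Hence $q_n = 2P_k(b_n)(1-P_k(b_n))\geq c'/b_n$ for large $n$. Since $A=(b_i)$ is an increasing arithmetic progression, $b_i$ grows linearly in $i$, so $\sum 1/b_i = \infty$, giving $\sum q_i = \infty$ and $s_{A,n}\to 1$.

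The main obstacle is extracting a clean, uniform lower bound of the form $P_k(n)\gg 1/n$ from \hyperref[fixedpointsinwreaths]{Theorem~\ref*{fixedpointsinwreaths}}; once that estimate is in hand, both parts of the corollary are essentially formal. Everything else is a bookkeeping exercise with independent events indexed by the factors of $W_A(n)$, and the explicit telescoping of the recurrence.
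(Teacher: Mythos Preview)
Your argument is correct and close in spirit to the paper's, but the packaging differs enough to be worth a sentence. For the recurrence, your probabilistic/independence phrasing is exactly the paper's counting argument (the paper just writes out $|W_A(n)|=|W_A(n-1)|\cdot|W_{A,n}|$ and $|C_{A,i}(n)|=|C_{A,i}(n-1)|\cdot|W_{A,n}|$ for $i<n$ and combines). For the limit, the paper isolates two auxiliary results: a lemma (\hyperref[comparativerecurrence]{Lemma~\ref*{comparativerecurrence}}) asserting that any recurrence $t_n=t_{n-1}+a_n(1-t_{n-1})$ with $a_n\in[0,1]$, $a_n\to 0$, and $\sum a_n=\infty$ forces $t_n\to 1$, and a quantitative estimate (\hyperref[pnbound]{Theorem~\ref*{pnbound}}) that $\bigl|P_k(n)(1-P_k(n))-\tfrac{1}{n}\bigr|<\tfrac{121}{n^2}$, which immediately supplies those hypotheses for $a_n=2P_k(b_n)(1-P_k(b_n))$. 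You instead telescope directly to $1-s_{A,n}=\prod_{i\le n}(1-q_i)$ and pull the needed lower bound $P_k(n)\gg 1/n$ straight from \hyperref[fixedpointsinwreaths]{Theorem~\ref*{fixedpointsinwreaths}}; this is the same content, just with the two auxiliary statements inlined rather than stated separately.
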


We turn to computing $P_{r,k}$ for general $r\in\Z_{>0}$ and $k\in\Z_{>1}$.
To do so, we recall the \emph{rencontres numbers} from combinatorics.
For any $r\in\Z_{>0}$ and $i\in\lb0,\ldots,r\rb$, we will denote the $(r,i)$th \emph{rencontres number} by $D_{r,i}$; that is, $D_{r,i}$ is the number of permutations of $\lb1,\ldots,r\rb$ with exactly $i$ fixed points.
In particular, the number of derangements of $\lb1,\ldots,r\rb$ is $D_{r,0}$.
For convenience, we set $D_{0,0}=1$.
We now record some basic identities involving rencontres numbers, which we will use in the proof of \hyperref[fixedpointsinwreaths]{Theorem~\ref*{fixedpointsinwreaths}}, below.
\begin{rencontresprop}\label{rencontresprop}
For all $i,r\in\Z_{\geq 0}$,
\begin{enumerate} 
\item $D_{r,i}=\binom{r}{i}D_{r-i,0}$ and 
\item $\sum_{i=1}^r{\binom{r}{i}D_{r-i,0}}=r!-D_{r,0}$.
\end{enumerate}
\end{rencontresprop}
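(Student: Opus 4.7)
The plan is to prove both identities by direct combinatorial arguments using only the definition of $D_{r,i}$ as the number of permutations of $\{1,\ldots,r\}$ with exactly $i$ fixed points, together with the trivial fact that the $D_{r,i}$ partition $S_r$ by fixed-point count.

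For part (1), I would construct a bijection between the set of permutations counted by $D_{r,i}$ and the set of pairs $(T,\pi)$, where $T\subseteq\{1,\ldots,r\}$ has size $i$ and $\pi$ is a derangement of the complementary set $\{1,\ldots,r\}\setminus T$. Given a permutation $\sigma\in S_r$ with exactly $i$ fixed points, let $T$ be its set of fixed points and let $\pi=\sigma|_{\{1,\ldots,r\}\setminus T}$; since $\sigma$ fixes exactly the elements of $T$, the restriction $\pi$ is a fixed-point-free permutation of an $(r-i)$-element set, i.e., a derangement counted by $D_{r-i,0}$. The map is clearly invertible (reassemble $\sigma$ by fixing every element of $T$ and applying $\pi$ elsewhere). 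There are $\binom{r}{i}$ choices for $T$ and $D_{r-i,0}$ choices for $\pi$, giving $D_{r,i}=\binom{r}{i}D_{r-i,0}$.

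For part (2), I would partition $S_r$ according to number of fixed points, which yields $r!=\sum_{i=0}^{r}D_{r,i}$. Substituting the formula from part (1) into every term gives
\[
r!=D_{r,0}+\sum_{i=1}^r\binom{r}{i}D_{r-i,0},
\]
and rearranging delivers the claimed identity. (Note the $i=0$ term is $D_{r,0}$ since $\binom{r}{0}D_{r,0}=D_{r,0}$.)

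There is no real obstacle here; both identities are elementary and follow from unwinding the definitions. The only thing to be mildly careful about is the boundary conventions—verifying that the formulas hold when $i=0$ or $i=r$ (using $D_{0,0}=1$ as specified in the text)—but these are immediate from the bijection.
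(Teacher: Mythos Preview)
Your proposal is correct and matches the paper's own proof essentially verbatim: the paper proves (1) by noting that a permutation with exactly $i$ fixed points is determined by choosing its fixed-point set and specifying a derangement on the complement, and proves (2) by summing $D_{r,i}$ over $i$ to get $r!$ and then applying (1). Your write-up simply spells out the bijection and boundary cases in a bit more detail.
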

\begin{proof}
For (1), note that a permutation of $\lb 1,\dots,r\rb$ with precisely $i$ fixed points is completely determined by choosing its $i$ fixed points and specifying its action on the $r-i$ remaining non-fixed points. For (2), observe that $\sum_{i=0}^r D_{r,i} = |S_r|=r!$, as each permutation in $S_r$ contributes to exactly one term in the sum, then apply (1).
\end{proof}

We now prove an important estimate on $P_{r,n}$ for all wreath products defined above (that is, a larger class of wreath products than those which arise as Galois groups of dynatomic polynomials).

\begin{fixedpointsinwreaths}\label{fixedpointsinwreaths}
Suppose that $n,r\in\Z_{>0}$.
Then
\[
\lv P_{r,n}-\lp1-e^{-\frac{1}{n}}\rp\rv
<\frac{1+2^r}{r!}.
\]
\end{fixedpointsinwreaths}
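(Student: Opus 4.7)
The plan is to average over the symmetric group component of a random element of $\Z/n\Z\wr S_r$ to obtain a clean closed form for $P_{r,n}$, and then compare the resulting finite sum term by term against the Taylor expansion of $1-e^{-1/n}$.

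First I would note that $\sigma=(\mathbf{a},\pi)$ sends $(\overline{b},i)$ to $(\overline{b+a_i},\pi(i))$, so $(\overline{b},i)$ is fixed by $\sigma$ precisely when $\pi(i)=i$ and $\overline{a_i}=\overline{0}$ in $\Z/n\Z$. Consequently, for any $\pi\in S_r$ with exactly $j$ fixed points, the fraction of tuples $\mathbf{a}\in(\Z/n\Z)^r$ for which $\sigma=(\mathbf{a},\pi)$ has \emph{no} fixed point is $(1-1/n)^j$. Grouping permutations by their number of fixed points via rencontres numbers therefore gives the exact identity
\[
P_{r,n}=1-\sum_{j=0}^{r}\frac{D_{r,j}}{r!}\left(1-\frac{1}{n}\right)^{j}.
\]

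Next, I would match this against the expansion
\[
1-e^{-1/n}=1-\sum_{j=0}^{\infty}\frac{e^{-1}}{j!}\left(1-\frac{1}{n}\right)^{j},
\]
obtained from $e^{-1/n}=e^{-1}\cdot e^{1-1/n}$. Part~(1) of \hyperref[rencontresprop]{Lemma~\ref*{rencontresprop}} gives $D_{r,j}/r!=D_{r-j,0}/\bigl(j!(r-j)!\bigr)$, and the standard inclusion-exclusion formula $D_{s,0}/s!=\sum_{i=0}^{s}(-1)^i/i!$ is an alternating series, so $|D_{s,0}/s!-e^{-1}|\leq 1/(s+1)!$. Hence
\[
\left|\frac{D_{r,j}}{r!}-\frac{e^{-1}}{j!}\right|\leq \frac{1}{j!(r-j+1)!}.
\]

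Finally, I would split the error into the matched range $0\leq j\leq r$ and the Taylor tail $j>r$, using $(1-1/n)^{j}\leq 1$ throughout:
\[
\bigl|P_{r,n}-(1-e^{-1/n})\bigr|\leq \sum_{j=0}^{r}\frac{1}{j!(r-j+1)!}+e^{-1}\sum_{j=r+1}^{\infty}\frac{1}{j!}.
\]
Rewriting $1/\bigl(j!(r-j+1)!\bigr)=\binom{r+1}{j}/(r+1)!$ identifies the first sum as $(2^{r+1}-1)/(r+1)!$, and the usual factorial tail estimate bounds the second by $2e^{-1}/(r+1)!$. Since $2e^{-1}<1$, the total is strictly less than $2^{r+1}/(r+1)!$, which for $r\geq 1$ is at most $2^{r}/r!<(1+2^{r})/r!$. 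No step is delicate; the only real care is in combining the two error contributions cleanly, and the stated bound is deliberately loose.
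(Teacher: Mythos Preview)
Your proof is correct and follows the same overall strategy as the paper: derive an exact closed form for $P_{r,n}$ in terms of rencontres/derangement numbers, then compare it term by term to the Taylor expansion of $e^{1-1/n}$ (equivalently, $e^{-1}e^{1-1/n}=e^{-1/n}$). The main difference is in how the closed form is obtained. The paper counts, for each $j$, the elements of $\Z/n\Z\wr S_r$ with exactly $nj$ fixed points and then sums over $j$, which requires a double sum and an application of \hyperref[rencontresprop]{Lemma~\ref*{rencontresprop}} to collapse it. You instead compute the complement directly: for a permutation $\pi$ with $j$ fixed points, the fraction of $\mathbf{a}$ giving $(\mathbf{a},\pi)$ no fixed point is $(1-1/n)^j$, which immediately yields $1-P_{r,n}=\sum_j (D_{r,j}/r!)(1-1/n)^j$. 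This is the same formula the paper reaches (after the change of index $i=r-j$), but obtained in one line. In the comparison step, you use the alternating-series bound $|D_{s,0}/s!-e^{-1}|\leq 1/(s+1)!$, which is slightly sharper than the paper's $|D_{i,0}-i!/e|<1$ and makes the final arithmetic cleaner; your bound $2^{r+1}/(r+1)!$ is in fact stronger than the stated $(1+2^r)/r!$.
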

\begin{proof}
We begin by noting that if $\sigma\in\Z/n\Z\wr S_r$, then $\lv\Fix{\sigma}\rv$ is a multiple of $n$. This follows from the fact that if $\sigma$ fixes any $\lp\overline{b},i\rp\in B(n,r)$, then it must fix each $\lp\overline{c},i\rp$ for all $\overline{c}\in\Z/n\Z$.
Now, if $j\in\lb1,\ldots,r\rb$, $\sigma=\lp\lp\overline{a_i}\rp,\pi\rp$, and $\lv\Fix{\sigma}\rv=nj$, then $\pi$, acting on $\lb1,\ldots,r\rb$, has at least $j$ fixed points.
Moreover, there is a subset $R$ of the fixed points of $\pi$ such that
\begin{itemize}
\item $\lv R\rv=j$ and
\item if $i^\prime\in\lb1,\ldots,r\rb$ is a fixed point of $\pi$, then $i^\prime\in R$ if and only if $\overline{a_{i^\prime}}=0$.
\end{itemize}
In other words, if $\pi\in S_r$, $\Fix{\pi}=T$, and $\lp\overline{a_i}\rp\in\lp\Z/n\Z\rp^r$, then $\lv\Fix\lp\lp\overline{a_i}\rp,\pi\rp\rv=nj$ if and only if there exists $R\subseteq T$ with $\lv R\rv=j$ and for all $i^\prime\in T$, $a_{i^\prime}=\overline{0}$ if and only if $i^\prime\in R$.
Using this fact, and enumerating permutations $\pi$ by their number of fixed points, note that
\[
\lv\lb\sigma\in\Z/n\Z\wr S_r\mid\lv\Fix{\sigma}\rv=nj\rb\rv
=\sum_{i=1}^r{\binom{i}{j}D_{r,i}(n-1)^{i-j}n^{r-i}}.
\]
Using \hyperref[rencontresprop]{Lemma~\ref*{rencontresprop}}, we see that
\begin{align*}
\lv\lb\sigma\in\Z/n\Z\wr S_r\mid\Fix{\sigma}\neq\emptyset\rb\rv
&=\sum_{j=1}^r{\sum_{i=1}^r{\binom{i}{j}D_{r,i}}(n-1)^{i-j}n^{r-i}}\\
&=\sum_{j=1}^r{\sum_{i=1}^r{\binom{i}{j}\binom{r}{i}D_{r-i,0}}(n-1)^{i-j}n^{r-i}}\\
&=\sum_{i=1}^r{\binom{r}{i}D_{r-i,0}n^{r-i}\sum_{j=1}^r{\binom{i}{j}(n-1)^{i-j}}}\\
&=\sum_{i=1}^r{\binom{r}{i}D_{r-i,0}n^{r-i}\lp n^i-(n-1)^i\rp}\\
&=\sum_{i=1}^r{\binom{r}{i}D_{r-i,0}n^r}-\sum_{i=1}^r{\binom{r}{i}D_{r-i,0}n^{r-i}(n-1)^i}\\
&=r!n^r-D_{r,0}n^r-\sum_{i=1}^r{\binom{r}{i}D_{r-i,0}n^{r-i}(n-1)^i}\\
&=r!n^r-\sum_{i=0}^r{\binom{r}{i}D_{i,0}n^i(n-1)^{r-i}}.
\end{align*}
Thus,
\[
P_{r,n}
=\frac{1}{r!n^r}\lp r!n^r-\sum_{i=0}^r{\binom{r}{i}D_{i,0}n^i(n-1)^{r-i}}\rp
=1-\sum_{i=0}^r{\frac{D_{i,0}}{i!(r-i)!}\lp\frac{n-1}{n}\rp^{r-i}}.
\]
Using the Taylor expansion of $e^x$ evaluated at $x=1-\frac{1}{n}$, we see that
\[
\lv P_{r,n}-\lp1-e^{-\frac{1}{n}}\rp\rv
=\lv\frac{1}{e}\sum_{i=0}^\infty{\frac{1}{i!}\lp\frac{n-1}{n}\rp^i}
-\sum_{i=0}^r{\frac{D_{i,0}}{i!(r-i)!}\lp\frac{n-1}{n}\rp^{r-i}}\rv.
\]
Finally, we use the well-known fact that $\frac{i!}{e}-1<D_{i,0}<\frac{i!}{e}+1$ to conclude
\begin{align*}
\lv P_{r,n}-\lp1-e^{-\frac{1}{n}}\rp\rv
&<\frac{1}{e}\sum_{i=0}^\infty{\frac{1}{i!}\lp\frac{n-1}{n}\rp^i}
-\frac{1}{e}\sum_{i=0}^r{\frac{i!}{i!(r-i)!}\lp\frac{n-1}{n}\rp^{r-i}}
+\sum_{i=0}^r{\frac{1}{i!(r-i)!}\lp\frac{n-1}{n}\rp^{n-i}}\\
&\leq\frac{1}{(r+1)!}\lp\frac{n-1}{n}\rp^{r+1}
+\frac{1}{r!}\lp1+\frac{n-1}{n}\rp^r\\
&<\frac{1+2^r}{r!}.
\end{align*}
\end{proof}

We record a simple bound we will use in our study of fixed point proportions.
The goal is to prove that $P_k(n)(1-P_k(n))$ is close enough to $\frac{1}{n}$ to satisfy the hypotheses of \hyperref[comparativerecurrence]{Lemma~\ref*{comparativerecurrence}}, so the exact error bound does not matter much.


\begin{pnbound}\label{pnbound}
Suppose that $k\in\Z_{>0}$.
If $n\in\Z_{>0}$, then
\[
\lv P_k(n)\lp1-P_k(n)\rp-\frac{1}{n}\rv<\frac{121}{n^2}.
\]
\end{pnbound}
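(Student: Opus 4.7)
The plan is to combine Theorem~\ref{fixedpointsinwreaths}, which says $P_k(n)\approx 1-e^{-1/n}$, with a Taylor expansion comparing the resulting product to $1/n$.

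First, I would set $a_n:=1-e^{-1/n}$ and $\epsilon_n:=P_k(n)-a_n$, and record the algebraic identity
\[
P_k(n)(1-P_k(n))-a_n(1-a_n)=\epsilon_n(1-2a_n-\epsilon_n).
\]
Since $a_n\in(0,1)$ and $|\epsilon_n|\leq 1$, this yields $|P_k(n)(1-P_k(n))-a_n(1-a_n)|\leq 2|\epsilon_n|$, while Theorem~\ref{fixedpointsinwreaths} bounds $|\epsilon_n|$ by $(1+2^{r_k(n)})/r_k(n)!$.

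Next, I would Taylor expand $a_n(1-a_n)=e^{-1/n}-e^{-2/n}$ using the series for $e^x$:
\[
e^{-1/n}-e^{-2/n}-\frac{1}{n}=\sum_{j=2}^{\infty}\frac{(-1)^{j+1}(2^j-1)}{j!\,n^j},
\]
whose absolute value is at most $(1/n^2)\sum_{j\geq 2}2^j/j!=(e^2-3)/n^2<5/n^2$, valid for all $n\geq 1$ since $1/n^j\leq 1/n^2$ when $j\geq 2$. Combining the two estimates gives $|P_k(n)(1-P_k(n))-1/n|<2|\epsilon_n|+5/n^2$.

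Finally, I would split into two regimes. For $n\leq 11$, the trivial estimate $|P_k(n)(1-P_k(n))-1/n|\leq 1$ is already less than $121/n^2\geq 1$. For $n\geq 12$, one has $r_k(n)\geq r_2(n)$, which grows super-exponentially in $n$ (for instance $r_2(12)=335$); by Stirling's approximation, $(1+2^{r_k(n)})/r_k(n)!$ is smaller than $1/n^2$ by many orders of magnitude, so the Taylor bound dominates and gives $|P_k(n)(1-P_k(n))-1/n|<6/n^2<121/n^2$ with room to spare. The main technical obstacle is that Theorem~\ref{fixedpointsinwreaths} produces a vacuous bound when $r_k(n)$ is small (for example $r_2(2)=1$, yielding a bound of $3$), but the generous constant $121$ in the statement is precisely what allows the small-$n$ cases to be dispatched via the trivial estimate without invoking Theorem~\ref{fixedpointsinwreaths} at all.
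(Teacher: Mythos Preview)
Your proposal is correct and follows essentially the same approach as the paper: both arguments split the error via the intermediate quantity $e^{-1/n}(1-e^{-1/n})$, invoke Theorem~\ref{fixedpointsinwreaths} for the first piece and a Taylor expansion for the second, then dispatch $n\leq 11$ with the trivial bound and $n\geq 12$ using the rapid growth of $r_k(n)$. The only differences are cosmetic constants (you obtain $2|\epsilon_n|+5/n^2$ where the paper gets $3|\epsilon_n|+3/(2n^2)$), and one minor imprecision: for $n=11$ you need the strict inequality $|P_k(n)(1-P_k(n))-1/n|<1$, which follows since $0<P_k(n)(1-P_k(n))<1/4$.
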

\begin{proof}
Using \hyperref[fixedpointsinwreaths]{Theorem~\ref*{fixedpointsinwreaths}}, we see that
\begin{align*}
\lv P_k(n)\lp1-P_k(n)\rp-e^{-\frac{1}{n}}\lp1-e^{-\frac{1}{n}}\rp\rv
&=\lv P_k(n)-\lp1-e^{-\frac{1}{n}}\rp+\lp1-e^{-\frac{1}{n}}\rp^2-P_n(k)^{2}\rv\\
&\leq\lv P_k(n)-\lp1-e^{-\frac{1}{n}}\rp\rv\cdot
\lp1+\lv\lp1-e^{-\frac{1}{n}}\rp+P_k(n)\rv\rp\\
&<\frac{1+2^{r_k(n)}}{r_k(n)!}\cdot3.
\end{align*}

Writing the Taylor series of $e^{x}\lp1-e^{x}\rp$ shows
\[
\lv e^{-\frac{1}{n}}\lp1-e^{-\frac{1}{n}}\rp-\frac{1}{n}\rv
<\frac{3}{2n^2},
\]
so by the triangle inequality,
\[
\lv P_k(n)\lp1-P_k(n)\rp-\frac{1}{n}\rv
<3\cdot\frac{1+2^{r_k(n)}}{r_k(n)!}+\frac{3}{2n^2}.
\]

Since $0<P_k(n)\lp1-P_k(n)\rp<\frac{1}{4}$, we know that $\lv P_k(n)\lp1-P_k(n)\rp-\frac{1}{n}\rv<1$ for all $n$; in particular, the statement is true for all $n\leq11$.
Thus, we may assume that $n\geq12$.
This implies immediately that $r_k(n)>\frac{k^{n-1}}{n}>7$ and $k^{n-1}>n^3$.
Next, we note that
\[
3\cdot\frac{1+2^x}{(x-1)!}<1
\hspace{10px}\text{for all}\hspace{10px}
x\geq7.
\]
Putting these estimates together, we obtain
\[
3\cdot\frac{1+2^{r_k(n)}}{(r_k(n))!}< \frac{1}{r_k(n)}<\frac{n}{k^{n-1}}<\frac{1}{n^2}
\hspace{10px}\text{when}\hspace{10px}
n\geq12.
\]
So for all such $n$, we conclude that
\[
\lv P_k(n)\lp1-P_k(n)\rp-\frac{1}{n}\rv
<\frac{1}{n^2}+\frac{3}{2n^2}=\frac{5}{2n^2}.
\]
\end{proof}

Before proving the corollary we will use in the proof of \hyperref[final]{Theorem~\ref*{final}}, we prove a short lemma about a certain class of recurrence relations.

\begin{comparativerecurrence}\label{comparativerecurrence}
Suppose $\lp a_n\rp_{n\in\Z_{>0}}$ is a sequence of real numbers that satisfies
\[
\sum_{n=1}^\infty{a_n}=\infty,
\hspace{10px}
\lim_{n\to\infty}{a_n}=0,
\hspace{10px}\text{and}\hspace{10px}
a_n\in[0,1]
\hspace{10px}\text{for all }\hspace{10px}
n\in\Z_{>0}.
\]
Suppose $t_0\in[0,1]$, and define
\[
t_n=t_{n-1}+a_n\lp1-t_{n-1}\rp
\hspace{10px}\text{for all}\hspace{10px}
n\in\Z_{>0}.
\]
Then $\lim_{n\to\infty}{t_n}=1$.
\end{comparativerecurrence}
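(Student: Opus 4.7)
The plan is to linearize the recursion by passing to the complementary sequence. Set $u_n := 1 - t_n$, so that $u_0 = 1 - t_0 \in [0,1]$. Substituting into the given recurrence yields
\[
u_n = 1 - t_{n-1} - a_n(1 - t_{n-1}) = (1-a_n)u_{n-1},
\]
and iterating gives the closed form $u_n = u_0\prod_{k=1}^{n}(1-a_k)$. The goal $t_n \to 1$ is therefore equivalent to showing $\prod_{k=1}^{n}(1-a_k) \to 0$ as $n \to \infty$.

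Because $a_k \in [0,1]$, each factor lies in $[0,1]$, so the partial products are nonnegative and nonincreasing. If some $a_k$ equals $1$, the product is identically zero from that point on and we are done; otherwise we may take logarithms (or simply use the elementary inequality $1-x \le e^{-x}$ valid for all real $x$) to obtain
\[
0 \le \prod_{k=1}^{n}(1-a_k) \le \exp\!\left(-\sum_{k=1}^{n} a_k\right).
\]
The divergence hypothesis $\sum_{k=1}^{\infty} a_k = \infty$ then forces the right-hand side to tend to $0$, hence $u_n \to 0$ and $t_n \to 1$.

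I do not foresee a genuine obstacle here; the only point requiring any care is confirming that all $u_n$ stay in $[0,1]$ so that the sequence $(t_n)$ remains in the unit interval and the manipulations are legitimate — this is immediate from $u_n = (1-a_n)u_{n-1}$ together with $0 \le a_n \le 1$ and $u_0 \in [0,1]$, proved by a trivial induction. Note that the hypothesis $a_n \to 0$ is actually not needed for this argument; only the boundedness $a_n \in [0,1]$ and the divergence of $\sum a_n$ are used, and the proof works uniformly in the starting value $t_0 \in [0,1]$.
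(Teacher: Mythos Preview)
Your proof is correct and takes a genuinely different route from the paper's. The paper argues by contradiction: it first observes (by a short induction) that $(t_n)$ is nondecreasing and bounded above by $1$, hence convergent to some $L\in[0,1]$; assuming $L<1$, it bounds the increments from below by $a_n(1-L)$ and telescopes to obtain $\lim_{n\to\infty}(t_n-t_0)=\infty$, a contradiction. Your approach instead linearizes the recursion via $u_n=1-t_n$, obtains the closed form $u_n=u_0\prod_{k=1}^n(1-a_k)$, and kills the product directly with $1-x\le e^{-x}$ and the divergence of $\sum a_k$. Your argument is more constructive---it yields the explicit rate $u_n\le u_0\exp\bigl(-\sum_{k\le n}a_k\bigr)$---and, as you note, makes transparent that the hypothesis $a_n\to0$ is superfluous (the paper's proof does not use it either, but this is less visible there). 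The paper's argument has the minor advantage of avoiding the exponential inequality, relying only on monotone convergence and telescoping.
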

\begin{proof}
A short induction argument shows that $\lp t_n\rp$ is nondecreasing and bounded above by 1.
So $\lp t_n\rp$ converges; suppose for a contradiction that it converges to $L\in[0,1)$.
Note that
\[
t_n-t_{n-1}=a_n\lp1-t_{n-1}\rp\geq a_n(1-L).
\]
Summing both sides of this inequality over all $n\in\Z_{\geq0}$ yields the contradiction
\[
\lim_{n\to\infty}{\lp t_n-t_0\rp}=\infty.
\]
\end{proof}

Putting together the results in this section, we can now prove \hyperref[recurrence]{Corollary~\ref*{recurrence}}.
\begin{proof}[Proof of \cref{recurrence}]

Recall that $W_A(n)=W_{A,1}\times\cdots\times W_{A,n}$, that
\[
C_{A,i}\lp n\rp=\lb\lp (\sigma_1,\tau_1),\dots,(\sigma_n,\tau_n)\rp\in W_A\lp n\rp\mid
\text{exactly one of $\Fix{\sigma_i},\Fix{\tau_i}$ is empty}\rb,
\]
and that $s_{A,n}$ is defined by $s_{A,0}=0$ and
\[
s_{A,n}=\frac{\lv\bigcup_{i=1}^n{C_{A,i}\lp n\rp}\rv}{\lv W_A\lp n\rp \rv}
\]
for $n>0$.
Observe that $\lv W_A(n)\rv = \lv W_A(n-1)\rv\lv W_{A,n}\rv$ and $\lv C_{A,i}(n)\rv=\lv C_{A,i}(n-1)\rv \lv W_{A,n}\rv$ for $1\leq i\leq n-1$.
Thus, the sequence $s_{A,n}$ satisfies the recurrence relation
\begin{align*}
s_{A,n} & = s_{A,n-1}\frac{\lv W_{A,n}\rv}{\lv W_{A,n}\rv} + \lp 1- s_{A,{n-1}}\rp \frac{\lv\lb (\sigma,\tau)\in W_{A,n}\mid \text{exactly one of Fix $\sigma$, Fix $\tau$ is empty}\rb\rv}{\lv W_{A,n}\rv}\\
& =s_{A,n-1}+\lp1-s_{A,n-1} \rp 2P_k\lp b_n\rp\lp1-P_k\lp b_n\rp\rp.
\end{align*}
Since $0<P_k\lp b_n\rp<1$ for all $n$, we note that $0<2P_k\lp b_n\rp\lp1-P_k\lp b_n\rp\rp<1$ for all $n$ as well.
Setting $a_n=2P_k\lp b_n\rp\lp1-P_k\lp b_n\rp\rp$, \hyperref[pnbound]{Theorem~\ref*{pnbound}} implies that $\lp a_n\rp$ satisfies the hypotheses of \hyperref[comparativerecurrence]{Lemma~\ref*{comparativerecurrence}}, which we apply to conclude the proof.
\end{proof}

\section{Applying the Hilbert Irreducibility and the Frobenius Density Theorems}
\label{puttingitalltogether}

In this section, for any polynomial $f(c,x)\in\Q[c][x]$ and any $a\in\Q$, we will write $f_a$ for the specialization of $f$ at $c=a$; that is, $f_a=f_a(x)=f(a,x)\in\Q[x]$.
Below is a version of the Hilbert Irreducibility Theorem, one which we will apply in the proof of \hyperref[final]{Theorem~\ref*{final}}.

\begin{named}{Hilbert Irreducibility Theorem}\label{HIT}
Let $f(c,x)\in\Z[c][x]$, let $K$ be the splitting field of $f(c,x)$ over $\Q(c)$, and for any $a\in\Z$, let $K_a$ be the splitting field of $f_a$ over $\Q$.
Suppose that $f(c,x)$ has no repeated roots \textup{(}in $\overline{\Q(c)}$\textup{)}. Then there exists a ``thin set'' $A\subset\Z$ such that for all $a\in\Z\setminus A$,
\[
f_a\text{ has no repeated roots}
\hspace{10px}\text{and}\hspace{10px}
\Gal{\lp K_a/\Q\rp}\simeq\Gal{\lp K/\Q(c)\rp}.
\]
\end{named}
\begin{HITremark}\label{HITremark}
The Hilbert Irreducibility Theorem is normally stated for irreducible polynomials (as in~\cite{Serre}).
To obtain the version stated above, let $g(c,x)$ be the minimal polynomial of a primitive element of $K/\Q(c)$, which is irreducible over $\Q(c)$.
Then specialize $g(c,x)$ instead of $f(c,x)$.
Moreover, if $f(c,x)$ has no repeated roots in $\overline{\Q(c)}$, then there are only finitely many $a\in\Q$ for which $f_a(x)$ has a repeated root in $\overline{\Q}$ (these are precisely the $a$ for which the discriminant $\Disc{f(c,x)}$ vanishes under the specialization at $c=a$). For more on the connection between the Hilbert Irreducibility Theorem and Galois theory, see, for example,~\cite{CohenHIT},~\cite[Chapter VIII]{LangDiophantineGeometry}, and~\cite[Chapter 1]{Volklein}.

As for the size of the ``thin set'' $A$, we know there is some constant $C$ such that for all $X\in\Z_{>0}$,
\[
\lv\lb a\in A\mid a\leq X\rb\rv\leq C\sqrt{X}
\]
(See~\cite{Serre}, Section~9.7, for more details).
In particular, there are infinitely many integers for which the conclusion of the theorem is true.
\end{HITremark}

Next, we recall a case of the Frobenius Density Theorem.
(See~\cite{SL} for more details.)

\begin{named}{Frobenius Density Theorem}\label{FDT}
Suppose that $f(x)\in\Z[x]$ is a monic polynomial with no repeated roots.
Let $G=\Gal{(f/\Q)}$ and $P\subseteq\mathcal{P}$ be the set of primes $p$ such that $[f]_p$ has a root in $\F_p$. Then
\[
\mu(P)
=\frac{1}{\lv G\rv}\cdot
\lv\lb\sigma\in G
\mid
\sigma\text{ fixes a root of $f$}
\rb\rv.
\]
\end{named}
\begin{FDTremark}\label{FDTremark}
If $f,g\in\Z[x]$ satisfy the hypotheses of the \ref{FDT}, the sets
\[
P_f=\{p\in\mathcal{P}\mid[f]_p\text{ has a root in }\F_p\}
\hspace{10px}\text{and}\hspace{10px}
P_g=\{p\in\mathcal{P}\mid[g]_p\text{ has a root in }\F_p\}
\]
are probabilistically independent (in the sense that $\mu\lp P_f\cap P_g\rp=\mu\lp P_f\rp\cdot\mu\lp P_g\rp$) if and only if the splitting fields of $f$ and $g$ are linearly disjoint over $\Q$. This follows immediately from the fact that the Galois group of a compositum of fields is the direct product of the Galois groups if and only if the fields are linearly disjoint.
\end{FDTremark}

In light of the \ref{FDT}, one might hope that given $f\in\Z[x]$ and $p\in\mathcal{P}$, the roots of $\lc\Phi_{f,n}\rc_p$ are precisely the points of $\lp\F_p,[f]_p\rp$ of period $n$, but---as mentioned in \hyperref[intro]{Section~\ref*{intro}}---this hope would be in vain.
Indeed, even before reducing mod $p$, if $\alpha\in\overline{\Q}$ is a point of period $n$ in $\lp\overline{\Q},f\rp$, then $\Phi_{f,n}(\alpha)=0$, but the converse is not always true---that is, there are examples of $(K,f)$, $n$, $\alpha$, and $d$, where $d<n$, $\alpha$ is a point of period $d$, but $\Phi_{f,n}(\alpha)=0$, see \cite[Example 4.2]{SilvermanADS}.
In general, if $\alpha\in\overline{\Q}$ and $\Phi_{f,n}(\alpha)=0$, then $\alpha$ is of period $d$ for some $d\leq n$, and $d<n$ is possible only if the polynomial derivative of $f^d$ evaluated at $\alpha$ is a root of unity; this quantity is known as the \emph{multiplier of $\alpha$}.
The way in which the period depends on the multiplier is the content of the following theorem~\cite[Theorem 4.5]{SilvermanADS}.
\begin{named}{Roots and Multipliers Theorem}\label{mrpe}
Suppose that $K$ is a field, $f\in K[x]$, $n\in\Z_{>0}$, and $\alpha\in\overline{K}$ satisfies $\Phi_{f,n}(\alpha)=0$.
Let $\lambda=(f^m)'(\alpha)$ where $m$ is the (least) period of $\alpha$. Then either
\begin{enumerate}
\item $n=m$,
\item $n=mj$, when $\lambda$ is a primitive $j$th root of unity, or
\item $n=mjp^e$, with $e\in\Z_{>0}$, when $\lambda$ is a primitive $j$th root of unity and $\Char{K}=p>0$.
\end{enumerate}
Conversely, if $\alpha\in\overline{K}$ has period $n$ in $\lp K(\alpha),f\rp$, then $\Phi_{f,n}(\alpha)=0$.
\end{named}

Luckily, given $f\in\Z[x]$ and $n\in\Z_{>0}$, the following corollary provides a sufficient condition that ensures that for all but finitely many primes $p\in\mathcal{P}$, the dynamical systen $\lp\F_p,[f]_p\rp$ has a point of period $n$ if and only if $\lc\Phi_{f,n}\rc_p$ has a root.
In the proof of \hyperref[final]{Theorem~\ref*{final}}, we will use the work in \hyperref[galois]{Section~\ref*{galois}} to ensure that the polynomials obtained by applying the \ref{HIT} satisfy this sufficient condition.


\begin{rootsandcycles}\label{rootsandcycles}
Let $f\in\Z[x]$ and $n\in\Z_{>0}$, and suppose that $f^n(x)-x$ has no repeated roots.
Then for all but finitely many $p\in\mathcal{P}$,
\[
\lc\Phi_{f,n}\rc_p\text{ has a root in }\F_p
\text{ if and only if }
\lp\F_p,[f]_p\rp\text{ has a point of period }n.
\]
\end{rootsandcycles}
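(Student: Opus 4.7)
The plan is to combine two ingredients: compatibility of the dynatomic polynomial with reduction modulo $p$, and the \ref{mrpe} applied in characteristic $p$ to the reduction $\lc f\rc_p$.

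As preparation I would check that for all but finitely many $p$, the identity $\lc\Phi_{f,n}\rc_p=\Phi_{\lc f\rc_p,n}$ holds in $\F_p[x]$. This follows by induction on $n$ from the defining identity $\prod_{d\mid n}\Phi_{f,d}=f^n(x)-x$, after excluding the finitely many primes dividing the leading coefficient of $f$ so that degrees are preserved under reduction. Simultaneously, since $f^n(x)-x$ has no repeated roots in $\overline{\Q}$, its discriminant is a nonzero rational; hence for all but finitely many primes $p$, the reduction $\lc f\rc_p^n(x)-x$ still has no repeated roots in $\overline{\F_p}$. I would discard both finite sets of bad primes.

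For the remaining $p$, one direction is immediate: if $\alpha\in\F_p$ has exact period $n$ under $\lc f\rc_p$, then the converse half of the \ref{mrpe} gives $\Phi_{\lc f\rc_p,n}(\alpha)=0$, so $\lc\Phi_{f,n}\rc_p$ has the root $\alpha$. For the other direction, suppose $\lc\Phi_{f,n}\rc_p(\alpha)=0$ for some $\alpha\in\F_p$. Then $\alpha$ is a root of $\lc f\rc_p^n(x)-x$ and so has some exact period $m\mid n$ under $\lc f\rc_p$. If $m<n$, the \ref{mrpe} forces either $n=mj$ or $n=mjp^e$ with $e>0$, where $\lambda=(\lc f\rc_p^m)'(\alpha)$ is a primitive $j$th root of unity. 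The chain rule combined with $\lc f\rc_p^m(\alpha)=\alpha$ gives $(\lc f\rc_p^n)'(\alpha)=\lambda^{n/m}$, and in both cases $\lambda^{n/m}=1$ (using $\lambda^{jp^e}=(\lambda^j)^{p^e}=1$ in characteristic $p$ for case (3)). Hence $\alpha$ would be a repeated root of $\lc f\rc_p^n(x)-x$, contradicting the discriminant step. So $m=n$, as needed.

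I expect the main obstacle to be the multiplicity computation above---converting the \ref{mrpe}'s statement about the multiplier $\lambda$ into the assertion that $\alpha$ is a repeated root of $\lc f\rc_p^n(x)-x$. This is the one place where the hypothesis that $f^n(x)-x$ has no repeated roots is genuinely used; the rest amounts to identifying and excluding the finitely many ``bad'' primes coming from leading coefficients and from the discriminant of $f^n(x)-x$.
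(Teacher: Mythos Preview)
Your proof is correct, and the easy direction (period $n$ implies a root of $\lc\Phi_{f,n}\rc_p$) matches the paper's exactly. For the harder direction the paper takes a different route: rather than staying in characteristic $p$, it lifts a root $\overline{a}\in\F_p$ of $\lc\Phi_{f,n}\rc_p$ to a root $\alpha$ of $\Phi_{f,n}$ in the splitting field $K/\Q$ via a chosen prime $\mathfrak{p}\mid p$. Since $f^n(x)-x$ has no repeated roots over $\Q$, the orbit points $\alpha,f(\alpha),\ldots,f^{n-1}(\alpha)$ are distinct algebraic numbers, so only finitely many rational primes can divide any difference $f^j(\alpha)-\alpha$; away from those primes, the period cannot drop upon reduction. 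Your argument instead excludes the primes dividing $\Disc\lp f^n(x)-x\rp$ (and the leading coefficient) and then argues entirely over $\F_p$, using the multiplier cases of the \ref{mrpe} together with the chain rule identity $(\lc f\rc_p^n)'(\alpha)=\lambda^{n/m}$ to force a repeated root of $\lc f\rc_p^n(x)-x$ whenever $m<n$. Your approach avoids algebraic number theory entirely and names the bad primes in one stroke via the discriminant; the paper's approach is more orbit-geometric, identifying the bad primes as precisely those where two points of a characteristic-zero $n$-cycle collide modulo $\mathfrak{p}$. The two finite exceptional sets are essentially the same, since a collision $f^j(\alpha)\equiv\alpha\pmod{\mathfrak{p}}$ of distinct roots of $f^n(x)-x$ forces $p\mid\Disc\lp f^n(x)-x\rp$.
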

\begin{proof}
As pointed out in Section~2 of~\cite{VH}, for example, if $f^n(x)-x$ has no repeated roots, then for all $\alpha\in\overline{\Q}$,
\[
\Phi_{f,n}(\alpha)=0
\text{ if and only if }
\alpha\text{ is a point of period $n$ in }\lp \Q(\alpha),f\rp.
\]
As usual, for any $\alpha\in\overline{\Q}$ and $p\in\mathcal{P}$, we say $p$ divides $\alpha$ if there exists a number field $K$ containing $\alpha$ and a prime ideal $\mathfrak{p}\subseteq\mathcal{O}_K$ such that $\mathfrak{p}\mid(p)$ and $\ord_{\mathfrak{p}}(\alpha)>0$.

Let $p\in\mathcal{P}$, and suppose that $\lc\Phi_{f,n}\rc_p$ has a root.
Let $K$ be the splitting field of $\Phi_{f,n}$, choose any prime $\mathfrak{p}$ lying over $p$, and denote by $\overline{\hspace{-4px}\phantom{+}\cdot\phantom{+}\hspace{-4px}}$ the reduction $\mathcal{O}_K\to\mathcal{O}_K/\mathfrak{p}$.
Since $\mathcal{O}_K/\mathfrak{p}$ is an extension of $\F_p$ and $\lc\Phi_{f,n}\rc_p$ has a root, there exists $a\in\Z\subseteq\mathcal{O}_K$ such that $\lc\Phi_{f,n}\rc_p\lp\overline{a}\rp=0$.
Since $\Phi_{f,n}$ splits in $K$, we know that $\lc\Phi_{f,n}\rc_p$ splits in $\mathcal{O}_K/\mathfrak{p}$ and the roots of $\Phi_{f,n}$ map onto the roots of $\lc\Phi_{f,n}\rc_p$ under $\overline{\hspace{-4px}\phantom{+}\cdot\phantom{+}\hspace{-4px}}$; choose any $\alpha\in K$ such that $\Phi_{f,n}(\alpha)=0$ and $\overline{\alpha}=\overline{a}$.

Now, by the \ref{mrpe}, we know that $\overline{a}$ is a periodic point of $\lp\F_p,[f]_p\rp$ of period at most $n$; let's suppose its period is strictly less than $n$ (so in particular, $n>1$).
This implies that there exists $j\in\lb1,\ldots,n-1\rb$ such that
\[
\overline{f^j(\alpha)-\alpha}=\lp[f]_p\rp^j\lp\overline{a}\rp-\overline{a}=0;
\]
that is, $\mathfrak{p}$ divides $f^j(\alpha)-\alpha$.
We know that $\alpha$ has period $n$ in $\lp K,f\rp$, so the points $\alpha,f(\alpha),f^2(\alpha),\dots,f^{n-1}(\alpha)$ are pairwise distinct; thus, there are only finitely many prime ideals of $\mathcal{O}_K$ dividing their differences, as desired.

Now suppose that $[f]_p$ has a point of period $n$ in $\lp\F_p,[f]_p\rp$.
It is easy to see that $[f^n]_p=\lp[f]_p\rp^n$, so $\lc\Phi_{f,n}\rc_p= \Phi_{[f]_p,n}$.
By the \ref{mrpe}, with $K=\F_p$, we know that $\lc\Phi_{f,n}\rc_p$ has a root in $\F_p$.
\end{proof}

Finally, we can apply \hyperref[recurrence]{Corollary~\ref*{recurrence}}, \hyperref[QbarQ]{Corollary~\ref*{QbarQ}}, and the results mentioned above to prove \hyperref[final]{Theorem~\ref*{final}}.

\begin{proof}[Proof of \cref{final}]
Let $\mathcal{T}=\lb J\subseteq\lb1,\ldots,M\rb\,\big\vert\,\lv J\rv=2\rb$, set $t=\lv\mathcal{T}\rv=\binom{M}{2}$, and choose any bijection $\beta:\mathcal{T}\to\lb1,\ldots,t\rb$. 
For any $J\in\mathcal{T}$, let $A_J$ denote the arithmetic progression $\lp\beta(J),\beta(J)+t,\beta(J)+2t,\dots\rp$.
Define $s_{A_J,0}=0$ and
\[
s_{A_J,i}
=s_{A_J,i-1}+2\lp1-s_{A_J,i-1}\rp P_{k}(\beta(J)+t(i-1))\lp1-P_k(\beta(J)+t(i-1))\rp
\hspace{10px}\text{for all}\hspace{10px}
i\in\Z_{>0}.
\]
By \hyperref[recurrence]{Corollary~\ref*{recurrence}}, for each choice of $J$ we know $s_{A_J,i}\to 1$ as $i\to\infty$.
Thus, there exists $N_0\in\Z_{>0}$ such that $s_{A_J,N_0}>1-\frac{\epsilon}{t}$ for all $J\in\mathcal{T}$.
Let $N=\lp tN_0\rp!$ and set $f=x^k+c\in\Q(c)[x]$, so that for any $a\in\Z$, $f_a=x^k+a\in\Z[x]$.
Next, let
\[
\mathcal{F}\lp a\rp=\lb\Sigma_{f_a,d}\mid d\in\Z_{>0}\text{ and }d\mid N\rb,
\]
where $\Sigma_{f_a,d}$ is the splitting field of $\Phi_{f_a,d}$ over $\Q$.
By Theorem~D of~\cite{MorGalGroups}, we know that $f^N(x)-x$ has no repeated roots in $\overline{\Q(c)}$, so by \hyperref[automorphismlemma]{Lemma~\ref*{automorphismlemma}}, for any $m\in\Z$, we see that $(f+m)^N(x)-x$ has no repeated roots either.
Thus, by \hyperref[QbarQ]{Corollary~\ref*{QbarQ}} and the \ref{HIT}, there exist infinitely many $M$-tuples $( m_1,\ldots,m_M)\in\Z^M$ such that
\begin{itemize}
\item
any field in $\bigcup_{j=1}^M{\mathcal{F}\lp m_j\rp}$ is linearly disjoint from the compositum of the others,
\item
if $\Sigma_{f_{m_j},d}\in\bigcup_{j=1}^M{\mathcal{F}\lp m_j\rp}$, then $\Gal{\lp\Sigma_{f_{m_j},d}/\Q\rp}\simeq\lp\Z/d\Z\wr S_{r_k(d)}\rp$, and
\item
for any $j\in\lb 1,\ldots,M\rb$, we know $\lp f_{m_j}\rp^N(x)-x$ has no repeated roots.
\end{itemize}
We will prove that for any such $\lp m_1,\ldots,m_M\rp$,
\[
\mu\lp\lb p\in\mathcal{P}\mid
\lb x^k+m_1,\ldots,x^k+m_M\rb\text{ is dynamically distinguishable mod }p\rb\rp
>1-\epsilon.
\]

To begin, fix such an $M$-tuple $\lp m_1,\ldots,m_M\rp$.
We introduce a bit of simplifying notation.
For any $J=\lb j,j^\prime\rb\in\mathcal{T}$, we will compare those $\Phi_{x^k+m_j,i},\Phi_{x^k+m_{j^\prime},i}$ for $i$ in the truncated arithmetic progression $\lp\beta(J)+t(i-1)\mid i\in\lb1,\ldots,N_0\rb\rp$.
To make this analysis more convenient, for any $J\in\mathcal{T}$ and $j\in J$, write
\[
\Phi_{J,j,i}=\Phi_{x^k+m_j,\beta(J)+t(i-1)}.
\]
Using this notation, we can define for any $J\in\mathcal{T}$:
\[
\Phi_J=\prod_{j\in J}{\prod_{i=1}^{N_0}{\Phi_{J,j,i}}}
\hspace{20px}\text{and}\hspace{20px}
G_J=\Gal{\lp\Phi_J/\Q\rp}.
\]
Now set 
\[
\Phi=\prod_{J\in\mathcal{T}}{\Phi_J}
\hspace{20px}\text{and}\hspace{20px}
G=\Gal{\lp\Phi/\Q\rp}.
\]
Note that 
\[
\Phi\text{ divides } \prod_{j=1}^M \lp \lp f_{m_j}\rp^N(x)-x\rp \text{ in }\Q[x]
\]
by the definition of the dynatomic polynomials, so 

\[
\Phi\text{ has distinct roots in }\overline{\Q}
\hspace{20px}\text{and}\hspace{20px}
G\simeq\prod_{J\in\mathcal{T}}{G_J}
\]
by our choice of $\lp m_1,\ldots,m_M\rp\in\Z^M$.

Next, we introduce the sets of primes whose natural densities we will compute; namely, for any $J\in\mathcal{T}$ and $i\in\lb1,\ldots,N_0\rb$, define
\begin{align*}
P^\Gamma_{J,i}&=\lb p\in\mathcal{P}\mid
\text{exactly one of }\lb\Gamma_{x^k+m_j,p}\mid j\in J\rb\text{ has a $(\beta(J)+t(i-1))$-cycle}\rb,\\
P^\Phi_{J,i}&=\lb p\in\mathcal{P}\mid
\text{exactly one of }\lb\lc\Phi_{J,j,i}\rc_p\mid j\in J\rb\text{ has a root in }\F_p\rb.
\end{align*}
As we will compare these sets to proportions of Galois groups, we define for any $J\in\mathcal{T}$,
\[
C_J=\lb\sigma\in G_J\mid\text{for some }i\in\lb1,\ldots,N_0\rb\text{, }\sigma\text{ fixes a root of exactly one of }\lb\Phi_{J,j,i}\mid j\in J\rb\rb.
\]
Now, set $P^\Phi_J=\bigcup_{i=1}^{N_0}{P^\Phi_{J,i}}$ and apply the \ref{FDT} to $\Phi_J$ to see that
\[
\mu\lp P^\Phi_J\rp=\frac{\lv C_J\rv}{\lv G_J\rv}.
\]
Next, recall that \hyperref[rootsandcycles]{Corollary~\ref*{rootsandcycles}} implies that for any $J\in\mathcal{T}$ and $i\in\lb1,\ldots,N_0\rb$, the symmetric difference of $P^\Gamma_{J,i}$ and $P^\Phi_{J,i}$ is finite.
Thus,
\begin{align*}
\mu\lp\lb p\in\mathcal{P}\mid
\lb x^k+m_1,\ldots,x^k+m_M\rb\text{ is dynamically distinguishable mod }p\rb\rp\hspace{-290px}\\
&=\mu\lp\bigcap_{J\in\mathcal{T}}
{\lb p\in\mathcal{P}\mid \lb x^k+m_j\mid j\in J\rb\text{ is dynamically distinguishable mod }p\rb}\rp\\
&\geq\mu\lp\bigcap_{J\in\mathcal{T}}{\lp\bigcup_{i=1}^{N_0}{P^\Gamma_{J,i}}\rp}\rp\\
&=\mu\lp\bigcap_{J\in\mathcal{T}}{\lp\bigcup_{i=1}^{N_0}{P^\Phi_{J,i}}\rp}\rp\\
&=\mu\lp\bigcap_{J\in\mathcal{T}}{P^\Phi_J}\rp\\
&=\prod_{J\in\mathcal{T}}{\frac{\lv C_J\rv}{\lv G_J\rv}},
\end{align*}
where the last step follows from \hyperref[FDTremark]{Remark~\ref*{FDTremark}}.

We will conclude the proof by showing that if $J\in\mathcal{T}$, then $\frac{\lv C_J\rv}{\lv G_J\rv}>1-\frac{\epsilon}{t}$, whence
\[
\prod_{J\in\mathcal{T}}{\frac{\lv C_J\rv}{\lv G_J\rv}}>\lp 1-\frac{\epsilon}{t}\rp^t>1-\epsilon.
\]
By \hyperref[GaloisWreathActions]{Remark~\ref*{GaloisWreathActions}}, \hyperref[recurrence]{Corollary~\ref*{recurrence}}, and our choice of $\lp m_1,\ldots,m_M\rp$, we know that 
\[
\frac{\lv C_J\rv}{\lv G_J\rv}=s_{A_J,N_0},
\]
so we are done by
our original choice of $N_0$.

\end{proof}

\section*{Acknowledgements} \label{Acknowledgements}

We would like to thank the referee for a careful reading of the paper and very helpful comments on the presentation. We also thank Patrick Morton for helpful comments about the proofs in his series of papers on dynatomic polynomials, Rafe Jones for pointing us to Morton's work, and Robert Lemke Oliver for many constructive conversations regarding the topics in this paper.

\bibliography{DistinguishingPolynomials}
\bibliographystyle{amsalpha}

\end{document}